\theoremstyle{definition}
\newtheorem{formula}{*}[section]
\newtheorem{definition}[formula]{Definition}
\newtheorem{corollary}[formula]{Corollary}
\newtheorem{remark}[formula]{Remark}
\newtheorem{lemma}[formula]{Lemma}
\newtheorem{theorem}[formula]{Theorem}
\newtheorem{example}[formula]{Example}
\newcommand{\forme}[1]{}
\begin{document}

\title{Every 4-equivalenced association scheme is Frobenius.}

\author{Bora Moon}
\address{Department of Mathematics, Postech, 77 Cheongam-Ro. Nam-Gu. Pohang. Gyeongbuk, Korea.}

\email{mbr918@postech.ac.kr}

\date{\today}

\maketitle

\begin{abstract}

For a positive integer $k$, we say that an association scheme $(\Omega,S)$ is {\it $k$-equivalenced} if each non-diagonal element of $S$ has valency $k$. {\it "$k$-equivalenced"} is weaker than pseudocyclic. It is known that \cite{HK}, \cite{MKP}, \cite{MP08} every $k$-equivalenced association scheme is Frobenius when $k$=2,3 and 
\cite{P15} every 4-equivalenced association scheme is pseudocyclic. 
In this paper, we will show that every 4-equivalenced association scheme is Frobenius.
\end{abstract}

\section{Introduction}

For a group $G$ acting transitively on a finite set $\Omega$, $G$ acts on $\Omega \times \Omega$ by $(\alpha, \beta)^{\sigma} = (\alpha^\sigma,\beta^{\sigma})$ where $\alpha, \beta \in \Omega,$ $\sigma \in G$. Let $S_G$ be the set of orbitals of $G$ on $\Omega \times \Omega$. A scheme $(\Omega, S)$ is called {\it schurian} if $S=S_G$ for some permutation group $G$ on $\Omega$. We say a scheme $(\Omega, S)$ is {\it Frobenius} if $S=S_G$ for some Frobenius permutation group on $\Omega$. Let $\mathcal{F}$ be the set of positive integers $k$ such that every $k$-equivalenced association is Frobenius. As we can see in \cite{HK}, \cite{MKP} and  \cite{MP08},  $7,9,11,12,13,14, 15\not \in \mathcal{F}$ and $2,3 \in \mathcal{F}$. In this paper, we will show $4 \in \mathcal{F}$ and some properties of a 4-equivalenced association scheme by using the concept of a plane. As in \cite{P15}, it is known that a 4-equivalenced association scheme $(\Omega,S)$ is Frobenius if $|S| \ge 165$, but it's a very rough bound since $|\Omega|\ge 657$. 
We have $4\in \mathcal{F}$ and \cite{T01} when a 4-equivalenced association scheme exists.


\section{Preliminaries}
\textbf{Introduction of an association scheme}
\begin{definition}
Let $\Omega$ be a finite set and $S$ a partition of $\Omega\times\Omega$ with the following properties: 
\begin{enumerate}
\item $1_{\Omega}=\{(x,x)|x\in\Omega\}\in S$;
\item for $s\in S$, there exists $s^*=\{(y,x)|(x,y)\in s\}\in S$;
\item for $s,t,r\in S, c_{st}^{r}:=|\{z\in\Omega|(x,z)\in s, (z,y)\in t\}|$ is constant for any choice of $(x,y)\in r.$\\

\end{enumerate}

Then, the pair $(\Omega,S)$ is an {\it association scheme} on $\Omega$ and $c_{st}^r$ is called the {\it intersection number} of $(\Omega,S)$.

If for any $s,t,r\in S$, $c_{st}^r=c_{ts}^r$, then $(\Omega,S)$ is {\it commutative}. If for any $s\in S$, $s=s^*$, then $(\Omega,S)$ is {\it symmetric}. For any $s\in S$, $n_s:=c_{ss^*}^{1_{\Omega}}$ is called the {\it valency} of $s$. And, if $n_s=k$ for any $s\in S^{\#}:=S$ $\backslash \{1_\Omega\}$, then $(\Omega,S)$ is called {\it k-equivalenced}.

\end{definition}

For  subsets $R, T \subset S$, $RT = \{ u\in S$ $|$ $c_{r t}^{u} \not = 0$ for some $r\in R, t\in T \}$. We will use the notation $uv :=\{u\}\{v\}=\{w\in S $ $|$ $c_{uv}^w\not =0\}$ for $u, v\in S.$

For a non-empty set $R\subset S$, $R^*:=\{r^*|r\in R\}$.  If $R^*R\subset R$, then we say that $R$ is {\it closed}.\\ 

Let $\langle R\rangle$ be the intersection of all closed subsets of $S$ which contain $R$. For $s\in S$, $\langle s \rangle:=\langle \{s\} \rangle$.

For $s\in S$, a matrix $A_s$ with \[(A_s)_{\alpha\beta}:=\left\{
\begin{array}{ll}
1  & \text{if}~(\alpha,\beta)\in s\\
0  & \text{if}~(\alpha,\beta)\notin s
\end{array} \right.\]

is called the {\it adjacency matrix} of $s$ where the rows and columns of $A_s$ are indexed by $\Omega$.

From the definition of association scheme, $A_uA_v=\sum_{w\in S} c_{uv}^{w}A_w$.

For $A,B\in$ Mat$_{\Omega}(\mathbb{C})$, $\langle A,B\rangle=\frac{1}{|\Omega|}$$Tr(AB^{*})$ : the {\it Hermitian product} where $B^{*}$ is the conjugate transpose of $B$.\\

For an association scheme $(\Omega,S),$ the automorphism group is $Aut(\Omega,S)=\{\sigma\in Sym(\Omega)|\mathrm{r}(\alpha,\beta)=\mathrm{r}(\alpha^{\sigma},\beta^{\sigma})$ for all $\alpha, \beta\in \Omega$ $\}$.

An association scheme $(\Omega, S)$ is called {\it Frobenius} if for some Frobenius group G which acts on $\Omega$, $(\Omega, S)=(\Omega, S^G)$ where $S^G$ is the set of orbitals of G on $\Omega \times \Omega$.\\

\begin{theorem}\cite{ZYM03}\label{std}
Let $(\Omega,S)$ be a $k$-equivalenced association scheme. If $k$ is even, then $(\Omega,S)$ is symmetric.
  
\end{theorem}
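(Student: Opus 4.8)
The goal is to show that the involution $s\mapsto s^*$ on $S$ is the identity, i.e. that every $s\in S$ satisfies $s=s^*$. The plan is to attack an arbitrary non-diagonal relation $s$ through the product $A_sA_{s^*}=\sum_{u\in S}c_{ss^*}^{u}A_u$. First I would record that this matrix is \emph{symmetric}: since the conjugate transpose of a $0$--$1$ matrix is its transpose and $A_s^{*}=A_{s^*}$, we get $(A_sA_{s^*})^{\top}=A_{s^*}^{\top}A_s^{\top}=A_sA_{s^*}$. Consequently $c_{ss^*}^{u}=c_{ss^*}^{u^*}$ for every $u\in S$, so the coefficients are constant along each pair $\{u,u^*\}$.

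The heart of the argument is a parity count on these coefficients. The diagonal entries of $A_sA_{s^*}$ all equal $n_s=k$ (they count out-neighbours of a vertex in $s$), and only $A_{1_\Omega}=I$ contributes to the diagonal, so $c_{ss^*}^{1_\Omega}=k$. Because $A_s$ and $A_{s^*}$ have constant row sum $k$, the product has constant row sum $k^2$, giving $\sum_{u\in S}c_{ss^*}^{u}n_u=k^2$. Using $n_{1_\Omega}=1$ and $n_u=k$ for $u\neq 1_\Omega$, this collapses to $\sum_{u\neq 1_\Omega}c_{ss^*}^{u}=k-1$. Now $k$ is even, so $k-1$ is odd. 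On the other hand, the relations $u$ with $u\neq u^*$ occur in pairs $\{u,u^*\}$ carrying equal coefficients $c_{ss^*}^{u}=c_{ss^*}^{u^*}$, so they contribute an \emph{even} amount to this sum. Hence the self-paired relations $u\neq 1_\Omega$ must contribute an odd amount; in particular at least one self-paired $u_0\neq 1_\Omega$ lies in $ss^*$.

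To convert this existence statement into the full conclusion I would argue by induction on $|S|$. In a minimal counterexample pick $s$ with $s\neq s^*$ and pass to the closed subset $\gn{s}$; since every relation retains valency $k$, this is again a $k$-equivalenced scheme, and if $\gn{s}\subsetneq S$ it is already symmetric by minimality, contradicting $s\neq s^*$. Thus one is reduced to the case $\gn{s}=S$, in which $s$ together with $s^*$ generates the whole scheme, and one applies the parity count above. In the cleanest situations -- for instance when $S$ has no non-diagonal self-paired relation at all -- the count $k-1=\text{(even)}$ is immediately absurd and forces $s=s^*$.

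The step I expect to be the main obstacle is exactly the gap between ``some self-paired relation appears in $ss^*$'' and ``$s$ itself is self-paired'': the product $ss^*$ may legitimately contain non-diagonal self-paired relations, and the generation reduction does not remove them, so the raw parity identity $\sum_{u\neq 1_\Omega}c_{ss^*}^{u}=k-1$ is not by itself contradictory. Closing this gap should require a sharper accounting that isolates the non-self-paired part of the support -- for example tracking the parity of $c_{ss^*}^{u}$ over self-paired $u$ against a global invariant of the adjacency algebra $\mathbb{C}S$ (the number of self-paired relations, equivalently Frobenius--Schur data for the $*$-structure) -- so as to force every relation occurring in $ss^*$, and ultimately $s$, to be self-paired.
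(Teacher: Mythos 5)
This theorem is quoted in the paper from \cite{ZYM03} without proof, so there is nothing internal to compare against; judged on its own terms, your proposal has a genuine gap, and to your credit you identify it yourself in the final paragraph. Everything up to the parity count is correct: $(A_sA_{s^*})^{\top}=A_sA_{s^*}$ does give $c_{ss^*}^{u}=c_{ss^*}^{u^*}$, the row-sum computation does give $\sum_{u\neq 1_\Omega}c_{ss^*}^{u}=k-1$, and for $k$ even this forces $\sum_{u=u^*,\,u\neq 1_\Omega}c_{ss^*}^{u}$ to be odd, hence the existence of a non-diagonal symmetric relation in $ss^*$. But that conclusion is about the \emph{support of} $s\cdot s^*$, whereas the statement to be proved is $s=s^*$, which is equivalent to $c_{ss}^{1_\Omega}\neq 0$, i.e.\ a statement about $s\cdot s$. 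Your argument establishes no identity linking the two, and the parity identity itself is invariant under $u\mapsto u^*$, so it is structurally incapable of distinguishing a symmetric $s$ from a non-symmetric one: it would read exactly the same for a hypothetical non-symmetric $s$ in an even-valency scheme. The reduction to $\gn{s}=S$ via a minimal counterexample is legitimate (closed subsets do induce $k$-equivalenced subschemes, and distinct relations are disjoint, so symmetry of the subscheme forces $s=s^*$), but it does not interact with the parity count and buys nothing toward closing the gap.

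Concretely, what is missing is the entire mechanism for propagating symmetry from the relation $u_0=u_0^*\in ss^*$ you have produced back to $s$ itself --- for instance an analysis of the intersection numbers $c_{su_0}^{s}$ and of $u_0$-neighbourhoods inside $\alpha s$, which is where the actual argument of Arad--Erez--Muzychuk does its work. The ``global Frobenius--Schur invariant'' you gesture at in the last paragraph is not defined or computed, so it cannot be counted as part of the proof. As it stands the proposal proves only the (true but much weaker) statement that every $k$-equivalenced scheme with $k$ even possesses a non-diagonal symmetric relation, and indeed that every $ss^*$ contains one; the theorem itself remains unproved.
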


\begin{theorem}\cite{MKP},\cite{MP08}\label{std}
Every $k$-equivalenced association scheme is Frobenius when $k=2,3$.
\end{theorem}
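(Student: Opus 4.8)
The plan is to reduce the statement to a structural dichotomy and then to recover a regular group together with a fixed-point-free complement. First I record the characterization that drives everything: a schurian scheme $S=S_G$ is Frobenius precisely when the point stabilizer $H=G_\alpha$ acts semiregularly on $\Omega\setminus\{\alpha\}$. Since the nonidentity suborbits of $G$ are exactly the relations in $S^{\#}$ and each has length equal to its valency, a $k$-equivalenced schurian scheme has every nonidentity suborbit of length $k$; as $H$ is transitive on each such suborbit with two-point stabilizer of index $k$, the Frobenius condition $H\cap H^{g}=1$ (for $g\notin H$) is equivalent to $|H|=k$. Thus for $k=2,3$ it suffices to prove two things: that the scheme is schurian, and that the stabilizer has order exactly $k$ (equivalently, that the complement is semiregular). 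In the realized picture $G=K\rtimes H$ with $K$ regular and $H$ a fixed-point-free automorphism group of order $k$, the relations are the $H$-orbits on $K\setminus\{1\}$.

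For $k=2$ the cited evenness theorem forces the scheme to be symmetric, so each relation graph is an undirected $2$-regular graph, i.e.\ a disjoint union of cycles, and the Bose--Mesner algebra is commutative. The goal is to show that $\Omega$ carries a regular abelian group structure $K$ for which the nonidentity relations are exactly the inversion pairs $\set{x,x^{-1}}$. I would do this by analyzing the intersection numbers $c_{ss}^{\,u}$ and $c_{st}^{\,u}$ of the valency-$2$ relations to exhibit $\Omega$ as a translation scheme over an abelian group, recover the addition from the cycle structure of a generating relation, and verify that $|K|$ is odd, since an involution in $K$ would produce a relation of valency $1$ other than $1_{\Omega}$. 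Inversion $\iota\colon x\mapsto x^{-1}$ is then a fixed-point-free automorphism of order $2$, and $K\rtimes\gn{\iota}$ is the desired Frobenius group with stabilizer of order $2$.

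For $k=3$ the scheme need not be symmetric, so I must track $s$ against $s^{\ast}$ throughout. Here the target kernel $K$ need not be abelian, only nilpotent of class at most $2$ (as is forced for a group admitting a fixed-point-free automorphism of order $3$), and the complement is cyclic of order $3$ acting freely, with all orbits on $K\setminus\{1\}$ of size $3$. I would again exploit the valency-$3$ intersection numbers: the products $A_sA_{s^{\ast}}$ and $A_sA_t$ give a small, tightly constrained list of structure constants, from which one reconstructs a sharply controlled local action of the stabilizer on each suborbit, shows that this local action glues to a genuine regular group $K$ on $\Omega$, and identifies the order-$3$ cyclic complement from the orbit pairing $s\mapsto s^{\ast}$. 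The symmetric and non-symmetric subcases then yield the two cyclotomic-type families, each manifestly Frobenius.

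The main obstacle in both cases is schurianity: passing from the purely combinatorial intersection-number data to an honest regular group $K$ acting on $\Omega$. Showing that the automorphism group of the scheme is already transitive (so that $S=S_{\mathrm{Aut}}$) and that its point stabilizer is semiregular is the crux; equivalently, one must rule out a nontrivial two-point stabilizer, which is exactly the assertion $|H|=k$. I expect the $k=3$ non-abelian-kernel subcase to be the most delicate, since there the reconstruction of $K$ cannot lean on commutativity and one must argue directly that the order-$3$ complement has no nontrivial fixed points, i.e.\ that the scheme is genuinely Frobenius rather than merely schurian.
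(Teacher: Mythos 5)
This statement is quoted by the paper from \cite{MKP} and \cite{MP08}; the paper contains no proof of it to compare against, so your proposal can only be judged on its own terms. As it stands it is an outline rather than a proof: the two steps you yourself identify as the crux --- schurianity, and the reconstruction of a regular group $K$ from the intersection numbers --- are exactly the steps you defer with ``I would do this by analyzing the intersection numbers'' and ``I expect the $k=3$ non-abelian-kernel subcase to be the most delicate.'' Nothing in the proposal actually produces the group $K$, verifies that the local cyclic structure read off from one valency-$2$ relation is consistent across all relations, or rules out a nontrivial two-point stabilizer; those are precisely the contents of \cite{MP08} (all valencies $1$ or $2$) and \cite{MKP} (valency $3$), each a full-length argument.

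There is also a concrete misstep in your reduction. You propose to show that $\mathrm{Aut}(\Omega,S)$ is transitive with semiregular point stabilizer, i.e.\ that the full automorphism group is Frobenius. That target is false in general: for $k=3$ the rank-$2$ scheme on $4$ points has $\mathrm{Aut}(\Omega,S)=\mathrm{Sym}(4)$, whose point stabilizer $\mathrm{Sym}(3)$ contains nonidentity elements fixing a second point, yet the scheme is Frobenius because the proper subgroup $A_4=V_4\rtimes \mathbb{Z}_3$ already realizes $S$ as its set of orbitals. (The paper records the same phenomenon for $k=4$ when $|S|=2$ or $3$.) So the correct target is that \emph{some} transitive subgroup of $\mathrm{Aut}(\Omega,S)$ with semiregular point stabilizer realizes $S$, and your equivalence ``Frobenius $\Leftrightarrow |H|=k$'' must be read with $H$ the stabilizer in that subgroup rather than in the full automorphism group. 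This does not doom the strategy, but the small-rank cases need separate treatment and the reduction must be restated before the remainder of the plan can be carried out.
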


The {\it indistinguishing number} of a relation $s \in S$ is defined by the number
 $c(s)=\sum_{u\in S^{\#}}c_{uu^{*}}^s$. The maximum of $c(s)$, $s \in S^{\#}$, is called the {\it indistinguishing number} of an association scheme $(\Omega, S)$.\\

We replace the definition of \textit{pseudocyclic} with the following theorem.

\begin{theorem} \cite{MP12}\label{theorem2.3} Let $(\Omega,S)$ be an association scheme. The following statements are equivalent.
\begin{enumerate}
\item $(\Omega, S)$ is a pseudocyclic association scheme with valency $k$.
\item $(\Omega, S)$ is a $k$-equivalenced scheme with $c(s) = k-1$ for all $s \in S^{\#}$.
\end{enumerate}

\end{theorem}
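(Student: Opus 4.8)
The plan is to observe that both statements already contain the hypothesis that $(\Omega,S)$ is $k$-equivalenced, and then to convert the indistinguishing-number condition in (2) into a statement about the non-principal multiplicities, which is exactly what pseudocyclicity in (1) asserts. So from the outset I assume $n_s=k$ for every $s\in S^{\#}$, and I set $n=|\Omega|$ and $d=|S^{\#}|$; counting the off-diagonal entries in a single row of $\sum_{s\in S}A_s=J$ gives $n=dk+1$.

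First I would record the algebraic meaning of the numbers $c(s)$. Since $A_sA_{s^*}=\sum_{w\in S}c_{ss^*}^{w}A_w$, summing over $s\in S^{\#}$ yields
\[
\sum_{s\in S^{\#}}A_sA_{s^*}=\sum_{w\in S}c(w)\,A_w,
\]
where I extend $c$ to the diagonal by $c(1_\Omega)=\sum_{s\in S^{\#}}n_s=dk=n-1$. Because the adjacency matrices $\{A_w\}_{w\in S}$ are linearly independent, the requirement $c(s)=k-1$ for all $s\in S^{\#}$ in (2) is equivalent to the single matrix identity
\[
\sum_{s\in S^{\#}}A_sA_{s^*}=(n-1)I+(k-1)(J-I)=(n-k)I+(k-1)J,
\]
with $I=A_{1_\Omega}$ and $J=\sum_{w\in S}A_w$. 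This reduces (2) to one identity in the adjacency algebra.

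Next I would diagonalise. A pseudocyclic scheme is commutative (and, by \cite{ZYM03}, symmetric whenever $k$ is even), so I may work in the basis of primitive idempotents $E_0=\tfrac1n J,E_1,\dots,E_d$ and write $A_s=\sum_i p_s(i)E_i$. Then $p_{s^*}(i)=\overline{p_s(i)}$ gives $A_sA_{s^*}=\sum_i|p_s(i)|^2E_i$, and the orthogonality relation $\sum_{s\in S}n_s^{-1}|p_s(i)|^2=n/m_i$, where $m_i=\operatorname{rank}E_i$, together with $n_s=k$ and $p_{1_\Omega}(i)=1$ yields
\[
\sum_{s\in S^{\#}}|p_s(i)|^2=k\Bigl(\frac{n}{m_i}-1\Bigr)=\frac{k(n-m_i)}{m_i}.
\]
Hence $\sum_{s\in S^{\#}}A_sA_{s^*}=\sum_i\frac{k(n-m_i)}{m_i}E_i$. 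Expanding the right-hand side of the matrix identity above as $(n-k)\sum_iE_i+(k-1)nE_0$, the coefficient of $E_0$ reads $k(n-1)=k(n-1)$ and holds automatically, while for each $i\ge1$ the coefficient of $E_i$ forces $\frac{k(n-m_i)}{m_i}=n-k$, i.e.\ $m_i=k$. Since $(\Omega,S)$ being pseudocyclic of valency $k$ means precisely that it is $k$-equivalenced with all non-principal multiplicities equal to $k$, this is exactly the equivalence (1)$\Leftrightarrow$(2).

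The main obstacle I expect is securing the spectral framework for the direction (2)$\Rightarrow$(1): one must first know that a $k$-equivalenced scheme satisfying $c(s)=k-1$ is commutative, so that the idempotents $E_i$ and the multiplicities $m_i$ are even defined. For even $k$ this is immediate from the symmetry result \cite{ZYM03}, but for odd $k$ it requires a separate argument, and one must check that the $m_i$ are genuine positive integers that the identity can pin down. The remaining work—the exact form of the orthogonality relation and the bookkeeping at $E_0$—is routine once commutativity is in hand.
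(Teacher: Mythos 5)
You should first note that the paper contains no proof of this statement at all: Theorem~\ref{theorem2.3} is imported verbatim from \cite{MP12} and is explicitly used as a \emph{replacement for the definition} of pseudocyclic, so there is no in-paper argument to compare yours against. Judged on its own, your computation in the commutative case is correct and is essentially the standard one: the identity $\sum_{s\in S^{\#}}A_sA_{s^*}=\sum_{w\in S}c(w)A_w$, the reduction of $c(s)=k-1$ to $(n-k)I+(k-1)J$, the orthogonality relation $\sum_{s}n_s^{-1}|p_s(i)|^2=n/m_i$, and the extraction $k(n-m_i)/m_i=n-k\Leftrightarrow m_i=k$ all check out, including the automatic consistency at $E_0$.

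The genuine gap is the one you half-acknowledge and then leave open: the theorem is stated for an arbitrary association scheme, and neither hypothesis gives you commutativity, so the primitive idempotents $E_i$, the eigenvalues $p_s(i)$ and the multiplicities $m_i$ on which your entire argument rests are not defined in general. Pseudocyclic (equivalently, equivalenced with $c(s)=k-1$) schemes need not be commutative; in \cite{MP12} pseudocyclicity is defined for arbitrary schemes via the constancy of $m_\chi/n_\chi$ over non-principal irreducible characters $\chi$ of the adjacency algebra, and their proof runs the same computation through the general (non-commutative) orthogonality relations $\sum_{s}n_s^{-1}\chi(A_s)\overline{\chi'(A_s)}=\delta_{\chi\chi'}\,n\,n_\chi^2/m_\chi$ rather than through a common eigenbasis. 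Your appeal to \cite{ZYM03} rescues only the even-$k$ case (which, to be fair, is all this paper uses, since $k=4$), but it does not prove the theorem as stated. A second, smaller soft spot: in the direction (i)$\Rightarrow$(ii) you simply decree that ``pseudocyclic of valency $k$'' \emph{means} $k$-equivalenced with all non-principal multiplicities $k$; under the usual definition (constant multiplicities only), the implication ``pseudocyclic $\Rightarrow$ equivalenced'' is itself a theorem that your argument would still owe.
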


\begin{theorem}\cite{P15}\label{std}
Every 4-equivalenced association scheme is pseudocyclic and transitive.
\end{theorem}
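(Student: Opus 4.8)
The plan is to prove the two conclusions in turn, using that $k=4$ is even. By \cite{ZYM03} the scheme is symmetric, so $s^{*}=s$ for all $s\in S$ and $c(s)=\sum_{u\in S^{\#}}c_{uu}^{s}$; by Theorem \ref{theorem2.3}, pseudocyclicity is then exactly the assertion that $c(s)=3$ for every $s\in S^{\#}$. I would first extract the linear constraints that $4$-equivalencedness places on the intersection numbers. Taking row sums in $A_{u}^{2}=\sum_{s}c_{uu}^{s}A_{s}$ gives $\sum_{s\in S}c_{uu}^{s}n_{s}=n_{u}^{2}=16$; separating the diagonal term $c_{uu}^{1_{\Omega}}=n_{u}=4$ and using $n_{s}=4$ on $S^{\#}$ yields $\sum_{s\in S^{\#}}c_{uu}^{s}=3$ for every $u\in S^{\#}$.

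Next I would convert this "row" information into information about $c(s)$. The standard identity $n_{s}c_{uu}^{s}=n_{u}c_{us^{*}}^{u^{*}}$ reads, in the symmetric equivalenced case, $c_{uu}^{s}=c_{su}^{u}$, so that $c(s)=\sum_{u\in S^{\#}}c_{su}^{u}=\mathrm{tr}(L_{s})$, the trace of left multiplication by $A_{s}$ on the adjacency algebra. Summing over $S^{\#}$ and reusing the previous display gives $\sum_{s\in S^{\#}}c(s)=\sum_{u\in S^{\#}}\sum_{s\in S^{\#}}c_{uu}^{s}=3\,|S^{\#}|$, so $c(s)=3$ holds on average. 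The entire difficulty is the passage from this averaged equality to the pointwise equality $c(s)=3$: the nonnegative integers $c_{uu}^{s}$ form an array all of whose row sums are $3$, but a priori the column sums $c(s)$ could fluctuate.

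To force uniformity I would analyze the local structure at a pair $(\alpha,\beta)\in s$. The bound $\sum_{s}c_{uu}^{s}=3$ means that the two-step $u$-paths leaving any vertex are extremely scarce, so only a short list of local configurations can occur around a vertex or an edge; these are the objects the paper organizes as planes. I would classify the admissible configurations and show that any vertex or pair carrying a profile $(c_{uu}^{s})_{u}$ different from the uniform one would, through the regularity of the intersection numbers, propagate to an inconsistent global count. Eliminating every non-uniform profile gives $c(s)=3$ for all $s$, hence pseudocyclicity by Theorem \ref{theorem2.3}; the same conclusion pins every non-principal multiplicity to the common value $4$, since $1+4\,|S^{\#}|=|\Omega|=1+f\,|S^{\#}|$ forces $f=4$.

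For transitivity I would exploit the rigidity produced by the same classification: it shows that the neighborhood configuration is isomorphic at every point and that an isomorphism between the neighborhoods of two points is essentially unique and propagates outward along the relations of $S$. Assembling these local matchings into a single permutation of $\Omega$ produces, for any $\alpha,\beta$, an element of $\mathrm{Aut}(\Omega,S)$ with $\alpha\mapsto\beta$, which is transitivity. I expect the two hard points to be exactly the two "lifting" steps just indicated: turning the sparse local data into a complete and stable classification of planes, and then promoting the resulting local isomorphisms to honest global automorphisms without ambiguity. The averaging identities are immediate, but these rigidity arguments carry essentially all of the weight.
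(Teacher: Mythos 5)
This statement is quoted from \cite{P15}; the paper under review offers no proof of its own, so your proposal has to stand or fall on its own merits. The preliminary reductions you make are correct: symmetry from Theorem 2.1 (\cite{ZYM03}), the row-sum identity $\sum_{s\in S^{\#}}c_{uu}^{s}=3$, the identity $c_{uu}^{s}=c_{su}^{u}$ in the equivalenced symmetric case, and the consequence that $c(s)=3$ \emph{on average} over $s\in S^{\#}$. But you yourself locate the entire difficulty in the passage from the averaged equality to the pointwise one, and at exactly that point the proposal stops being a proof: ``I would classify the admissible configurations and show that any non-uniform profile propagates to an inconsistent global count'' is a statement of intent, not an argument. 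No classification is given, no propagation mechanism is exhibited, and no contradiction is derived. This is not a small omission: proving $c(s)\le 3$ for every $s\in S^{\#}$ is precisely the main theorem of \cite{P15}, and the proof there occupies most of that paper and is the reason the plane machinery (which the present paper then reuses) is built at all. There is no known shortcut from the averaging identities to the pointwise bound; nonnegative integer arrays with constant row sums genuinely can have fluctuating column sums, so something structural about $k=4$ must be used, and your sketch does not identify what.

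The transitivity half has the same character. Assembling ``essentially unique'' local isomorphisms of neighborhoods into a global automorphism carrying $\alpha$ to $\beta$ is again the hard content --- compare the effort the present paper expends in Theorems 3.4 and 3.5 just to produce a single order-$4$ automorphism fixing one point, including delicate well-definedness and compatibility checks across pairs of planes. Nothing in your proposal establishes that the local matchings are consistent or that they extend. One smaller point: your concluding remark that pseudocyclicity ``pins every non-principal multiplicity to the common value $4$'' presupposes that the multiplicities are already known to be equal, which is part of what pseudocyclicity asserts rather than a consequence of the displayed counting identity alone. In summary: the easy identities are right, but the two steps you flag as carrying ``essentially all of the weight'' are exactly the ones left unproven, so the proposal has a genuine gap rather than an alternative route.
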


\textbf{Notation}

In this paper, we use $u$ (resp. $u\cdot v$) instead of $A_u$(resp. $A_u A_v$). \it{i.e.} $u\cdot v=\sum_{w\in S}c_{uv}^{w}w$ but $uv$=$\{s \in S | c_{uv}^s \ne 0 \}$. And $\mathrm{r}(\alpha,\beta)$ is a unique relation which contains $(\alpha, \beta)$ and $S^{\#}=S$ $\backslash$ $\{1_\Omega\}$.\\

\textbf{Elementary properties of a 4-equivalenced association scheme}\\

From now on, we assume that $(\Omega, S)$ is a 4-equivalenced association scheme. The following results are proved in \cite{P15}.

\begin{lemma}\cite{P15}\label{std}
For any $s\in S^{\#}$, one of the following holds.
\begin{enumerate}
\item $s\cdot s=4\cdot 1_{\Omega}+2u+v $ for some distinct $u,v\in S^{\#}$
\item $s\cdot s=4\cdot 1_{\Omega}+3s$
\end{enumerate}
\end{lemma}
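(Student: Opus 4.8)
The plan is to reduce the statement to a count of the coefficients of $s\cdot s$ and then to eliminate the two shapes not listed in (i)--(ii). First, since $k=4$ is even, the scheme is symmetric \cite{ZYM03}, so $s=s^{*}$ and each $A_s$ is symmetric. The diagonal coefficient of $s\cdot s=\sum_{w\in S}c_{ss}^{w}w$ is $c_{ss}^{1_\Omega}=n_s=4$, and the valency identity $n_s^{2}=\sum_{w\in S}c_{ss}^{w}n_w$ becomes, using $n_w=4$ for all $w\in S^{\#}$,
\[\sum_{w\in S^{\#}}c_{ss}^{w}=3 .\]
Hence the nonzero coefficients form a partition of $3$, of shape $\{3\}$, $\{2,1\}$, or $\{1,1,1\}$. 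The shape $\{2,1\}$ is exactly (i), and the shape $\{3\}$ with the $3$ carried by $s$ itself is exactly (ii), so the whole problem is to exclude the shape $\{1,1,1\}$ and the shape $\{3\}$ with the $3$ on some $w\neq s$.

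Next I would set up a local reading of these coefficients. For $(x,y)\in w$ one has $c_{ss}^{w}=|s(x)\cap s(y)|$, the number of common $s$-neighbours of $x$ and $y$; in particular $c_{ss}^{s}$ is the number of $s$-triangles on an $s$-edge. Fix $x$, and for $w\in S^{\#}$ let $N_w$ be the number of unordered pairs inside the $4$-set $s(x)$ lying in the relation $w$. Double counting the paths $a\!-\!x\!-\!b$ with $a,b\in s(x)$, $(a,b)\in w$, and using that $(\Omega,S)$ is transitive \cite{P15} so that $N_w$ is independent of $x$, yields $N_w=2\,c_{ss}^{w}$. Thus each admissible shape corresponds to splitting the six pairs of $s(x)$ into relations as $6$, $4+2$, or $2+2+2$. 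The split $6$ means $s(x)$ is a clique in a single relation $w$: if $w=s$ this is (ii), and one must show $w\neq s$ is impossible; the split $4+2$ is (i); and the split $2+2+2$, the three perfect matchings, must be excluded.

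For the off-diagonal shape $\{3\}$ the set $s(x)$ is a $w$-clique of size $4$ with $w\neq s$ while $c_{ss}^{s}=0$. Here I would exploit rigidity: applying the same count to $w$ gives $c_{ww}^{w}\ge 2$, every $s(x)$ lies inside a single $w$-clique, and this over-determines the $w$-neighbourhoods; tracing the $s$-edges between two such cliques produces a $K_{5,5}$ with a perfect matching removed, which I would feed into the pseudocyclic identity $c(s)=3$ (Theorem \ref{theorem2.3}) to reach a contradiction. For the shape $\{1,1,1\}$ the six pairs of $s(x)$ are the three matchings of a $4$-set, carried by distinct $u_1,u_2,u_3$ with $c_{ss}^{u_i}=1$. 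Using the symmetry $c_{ss}^{u_i}=c_{su_i}^{s}$ (valid as all valencies are $4$) and the products $c_{u_iu_j}^{u_\ell}$ forced by the matchings, I would study the closed subset generated by $u_1,u_2,u_3$ and the spectrum of $A_s^{2}=4I+A_{u_1}+A_{u_2}+A_{u_3}$, combining $c(s)=3$ with the multiplicity constraints of a pseudocyclic scheme.

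The main obstacle is precisely these exclusions. The passage to a partition of $3$ is routine, but the balanced splitting $2+2+2$ is locally consistent with every intersection-number identity available on a single neighbourhood, so I expect the $\{1,1,1\}$ case to be the real difficulty and to require the global hypotheses (pseudocyclic and transitive) rather than any one-vertex count --- indeed the kind of local-to-global control that the paper later organizes through planes.
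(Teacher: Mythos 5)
The paper itself offers no proof of this lemma (it is quoted from \cite{P15}), so there is no argument of the paper to compare yours against; judged on its own terms, your proposal is incomplete. The reduction is fine: symmetry from even valency, $c_{ss}^{1_\Omega}=4$, and $\sum_{w\in S^{\#}}c_{ss}^{w}=3$ from $n_s^2=\sum_w c_{ss}^w n_w$ correctly leave exactly the three shapes $\{3\}$, $\{2,1\}$, $\{1,1,1\}$, and the local reading $N_w=2c_{ss}^w$ is correct (it follows already from $c_{ws}^s=c_{ss}^w$, so the appeal to transitivity is not needed; likewise the assertion that in the $\{1,1,1\}$ case each $u_i$ carries a perfect matching needs the one-line justification that two $u_i$-pairs sharing a vertex would force $c_{u_is}^{s}\ge 2>1=c_{ss}^{u_i}$). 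The off-diagonal shape $\{3\}$ also dies quickly with the tools you name, and much more directly than your $K_{5,5}$ detour: if $s\cdot s=4\cdot 1_\Omega+3w$ with $w\ne s$, then $\alpha s$ is a $w$-clique of size $4$, so any $w$-edge inside it has two common $w$-neighbours, giving $c(w)\ge c_{ss}^{w}+c_{ww}^{w}\ge 3+2=5>3$, contradicting pseudocyclicity.

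The genuine gap is the shape $\{1,1,1\}$, which you yourself flag as ``the real difficulty'' and for which you supply only a list of tools (``I would study the closed subset generated by $u_1,u_2,u_3$ and the spectrum of $A_s^2$\dots''), not an argument. None of the identities you name visibly closes this case: $c(u_i)=3$ only yields $c_{ss}^{u_i}=1\le 3$; the three perfect matchings on a $4$-set produce no forced common neighbour in a single relation, so no excess in any $c(u_i)$; and the trace/multiplicity relations $\sum\theta^j$ for $j\le 4$ are consistent with $s\cdot s=4\cdot1_\Omega+u_1+u_2+u_3$ for all admissible orders. Excluding this configuration is precisely the content of the lemma, and it requires a further combinatorial argument (in \cite{P15} this is a nontrivial case analysis), so the proposal as written does not constitute a proof. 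One further caution: you invoke Theorem \ref{theorem2.3} together with pseudocyclicity of $4$-equivalenced schemes; that is legitimate within the present paper's ordering, but in the source \cite{P15} the structure of $s\cdot s$ is part of the machinery used to establish pseudocyclicity, so a self-contained proof along your lines would need to check it is not circular.
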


Define $S_i=\{s\in S||ss|=i\}$ for $i=1,2,3$. We can define two mappings $\varphi$ and $\psi$ from $S_3$ to $S^{\#}$ such that $s\cdot s=4\cdot 1_{\Omega}+2s^{\varphi}+s^{\psi}$.\\

After some elementary computations, we can modify \cite[Lemma 3.1]{P15} as follow:

\begin{lemma}\label{lemma2.6}
For $s,t\in S^{\#}$ with $s\ne t$, 
\begin{enumerate}
\item $s\cdot t=u_1+u_2+u_3+u_4$ for some distinct $u_i$'s \textit{iff}  $s\ne t^{\varphi}$ and $t\ne s^{\varphi}$
\item $s\cdot t=u_1+u_2+2s$ for some distinct $u_i$'s \textit{iff} $s\ne t^{\varphi}$ and $t=s^{\varphi}$
\item $s\cdot t=2s+2t$ \textit{iff} $s= t^{\varphi}$ and $t= s^{\varphi}$
\end{enumerate}

\end{lemma}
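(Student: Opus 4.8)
The plan is to read off both the shape of $s\cdot t$ and the $\varphi$-conditions from the intersection numbers $c_{ss}^t$ alone. First I would record the symmetries that are available. Since $k=4$ is even, the scheme is symmetric, so $s^*=s$ for every $s$ and every non-diagonal valency equals $4$. Computing the Hermitian product $\langle s\cdot t,w\rangle$ in two ways and using $n_s=n_t=n_w=4$ for non-diagonal relations gives the cyclic symmetry $c_{st}^w=c_{sw}^t=c_{wt}^s$ for all non-diagonal $s,t,w$; in particular, taking $w=s$ and $w=t$ yields the two identities $c_{st}^s=c_{ss}^t$ and $c_{st}^t=c_{tt}^s$ that drive the whole argument. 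Because $s\ne t=t^*$, the diagonal relation does not occur in $s\cdot t$, so every $w\in st$ has valency $4$; counting valencies in $s\cdot t$ then shows the coefficients $c_{st}^w$ are positive integers with $\sum_{w\in st}c_{st}^w=4$.

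Next I would compute the coefficients of $s$ and of $t$ in $s\cdot t$. By $c_{st}^s=c_{ss}^t$ and the dichotomy for $s\cdot s$, the coefficient of $s$ equals $2$ exactly when $s\in S_3$ and $t=s^{\varphi}$, equals $1$ exactly when $t=s^{\psi}$, and is $0$ otherwise (for $s\in S_2$ the only non-diagonal term of $s\cdot s$ is $3s$, so $c_{ss}^t=0$ whenever $t\ne s$). Symmetrically the coefficient of $t$ is $2$ iff $s=t^{\varphi}$, $1$ iff $s=t^{\psi}$, and $0$ otherwise. Thus the coefficients of the two ``factor'' relations are at most $2$, and I know precisely when each equals $2$; this already settles the forward direction of all three equivalences, since it computes $c_{st}^s$ and $c_{st}^t$ from any given form of the product.

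The hard part is to rule out every other possibility for the remaining mass, i.e. to prove that no relation different from $s$ and $t$ occurs in $s\cdot t$ with coefficient $\ge 2$ (equivalently $c_{st}^w\le 1$ for pairwise distinct non-diagonal $s,t,w$). I would argue combinatorially: if $c_{st}^w\ge 2$ then for a fixed $(\alpha,\beta)\in w$ there are two distinct common neighbours $\gamma_1,\gamma_2$ with $(\alpha,\gamma_i)\in s$ and $(\gamma_i,\beta)\in t$, whence $x:=\mathrm{r}(\gamma_1,\gamma_2)$ is a non-diagonal relation lying simultaneously in $ss$ and in $tt$, so $x\in\{s^{\varphi},s^{\psi}\}\cap\{t^{\varphi},t^{\psi}\}$. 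Counting such configurations against the valency $4$ and against the pseudocyclic identity $c(x)=3$ (Theorem \ref{theorem2.3} gives $c(x)=k-1=3$ for all $x\in S^{\#}$) should force $c_{st}^w\le 1$. This is the one place where the special value $k=4$ really enters and the only step needing more than bookkeeping; alternatively it is the structural content of \cite[Lemma 3.1]{P15}, which I would invoke if the direct count turns out to be delicate.

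Finally I would assemble the cases. Knowing that the maximal coefficient is $2$ and occurs only on $s$ or $t$, the relation $\sum_{w\in st}c_{st}^w=4$ leaves exactly the possibilities $(2,2)$, $(2,\le 1)$, $(\le 1,2)$, $(\le 1,\le 1)$ for the pair $(c_{st}^s,c_{st}^t)$. These give respectively $s\cdot t=2s+2t$, $s\cdot t=2s+u_1+u_2$, its mirror image (absorbed into (ii) by commutativity), and $s\cdot t=u_1+u_2+u_3+u_4$ with all $u_i$ distinct. Matching each case against the characterizations of when $c_{st}^s$ and $c_{st}^t$ equal $2$ then produces precisely the stated equivalences (i)--(iii).
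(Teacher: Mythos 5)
Your overall architecture is sound and most of it is correct: the identities $c_{st}^s=c_{ss}^t$ and $c_{st}^t=c_{tt}^s$ coming from symmetry and equal valencies, the count $\sum_{u\in st}c_{st}^u=4$, and the final case assembly all go through, and they do settle the forward implications of (i)--(iii). Note also that the paper offers no proof of this lemma at all --- it is imported from \cite[Lemma 3.1]{P15} ``after some elementary computations'' --- so your fallback of simply invoking that lemma is in effect the paper's own route.

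The genuine gap is in the step you yourself flag as the hard part, and the argument you sketch for it would not close as stated. From $c_{st}^w\ge 2$ you correctly produce a non-diagonal $x\in ss\cap tt$, i.e.\ $x\in\{s^{\varphi},s^{\psi}\}\cap\{t^{\varphi},t^{\psi}\}$; but the existence of such an $x$ is not by itself contradictory --- it genuinely occurs in cases (ii) and (iii), since $t=s^{\varphi}$ forces $t^{\varphi}=s^{\psi}$ --- and the pseudocyclic bound $c_{ss}^x+c_{tt}^x\le c(x)=3$ only gives $c_{ss}^xc_{tt}^x\le 2$. Feeding this into the relevant double count, namely $\sum_u (c_{st}^u)^2 = 4+\sum_{x\ne 1_\Omega}c_{ss}^xc_{tt}^x$ (equivalently $\langle s\cdot t,s\cdot t\rangle=\langle s\cdot s,t\cdot t\rangle$), yields only $\sum_u(c_{st}^u)^2\le 8$, which still permits $s\cdot t=2w+\cdots$ with $w\notin\{s,t\}$. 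What actually closes the argument is the paper's subsequent lemma from \cite{P15}: $\varphi$ is a bijection of $S_3$ preserving $S_3$, with $s^{\psi}=(s^{\varphi})^{\varphi}$. These facts convert ``$\{s^{\varphi},s^{\psi}\}\cap\{t^{\varphi},t^{\psi}\}\ne\emptyset$'' into ``$s=t^{\varphi}$ or $t=s^{\varphi}$'', whereupon your own identity $c_{st}^t=c_{tt}^s$ (resp.\ $c_{st}^s=c_{ss}^t$) places a coefficient $2$ on $t$ (resp.\ on $s$); only then does $\sum_u c_{st}^u=4$ leave no room for a multiplicity-$2$ term outside $\{s,t\}$, and the exact value of $\sum_u(c_{st}^u)^2$ computed from the displayed identity pins down the coefficient multiset in each case. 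So either supply that $\varphi$-bookkeeping explicitly or cite \cite[Lemma 3.1]{P15} outright; the count ``against valency $4$ and $c(x)=3$'' alone does not force $c_{st}^w\le 1$.
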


\begin{lemma}\cite{P15}\label{std}
For any $s\in S_3$,  $s^{\varphi}\in S_3$ and $s^{\psi}=(s^{\varphi})^{\varphi}$. Further, $\varphi$ and $\psi$ are bijections on $S_3$.
  
\end{lemma}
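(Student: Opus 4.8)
The plan is to combine the two structural facts already available — that $(\Omega,S)$ is symmetric (hence commutative) and that it is pseudocyclic of valency $4$ — with the product rules of Lemma~\ref{lemma2.6}. Because the scheme is symmetric and every nondiagonal relation has valency $4$, the reciprocity $n_w c^{w}_{uv}=n_u c^{u}_{wv}$ together with commutativity makes $c^{w}_{uv}$ invariant under every permutation of $u,v,w$ whenever $u,v,w\in S^{\#}$; I will use this triple symmetry throughout. Recall also that the dichotomy for $s\cdot s$ forces $S_1=\varnothing$, so $S^{\#}=S_2\sqcup S_3$, with $c^{s^\varphi}_{ss}=2$, $c^{s^\psi}_{ss}=1$ for $s\in S_3$ and $c^{s}_{ss}=3$ for $s\in S_2$.

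First I would settle $s^\varphi\in S_3$ together with the bijectivity of $\varphi,\psi$, using pseudocyclicity. By \cite{P15} and Theorem~\ref{theorem2.3} the indistinguishing number satisfies $c(s)=\sum_{u\in S^{\#}}c^{s}_{uu}=3$ for all $s\in S^{\#}$. Substituting the decompositions above gives
\[
c(s)=3\,[\,s\in S_2\,]+2\bigl|\{u\in S_3:u^\varphi=s\}\bigr|+\bigl|\{u\in S_3:u^\psi=s\}\bigr|.
\]
Taking $s\in S_2$ forces both preimage counts to vanish, so neither $\varphi$ nor $\psi$ ever takes a value in $S_2$; since $S^{\#}=S_2\sqcup S_3$ this yields $s^\varphi,s^\psi\in S_3$ for every $s\in S_3$. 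Taking $s\in S_3$ gives $2|\varphi^{-1}(s)|+|\psi^{-1}(s)|=3$, whence $|\varphi^{-1}(s)|\le 1$. As $\varphi\colon S_3\to S_3$ is a map, its fibres partition $S_3$, so $\sum_{s\in S_3}|\varphi^{-1}(s)|=|S_3|$; having $|S_3|$ summands, each at most $1$, summing to $|S_3|$ forces every fibre to have size exactly $1$, so $\varphi$ is a bijection. Then $|\psi^{-1}(s)|=3-2=1$ for all $s$, and $\psi$ is a bijection as well.

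The identity $s^\psi=(s^\varphi)^\varphi$ is where the real work lies. Writing $p=s^\varphi$ and $q=s^\psi$, the target is $c^{q}_{pp}=2$, i.e.\ $p^\varphi=q$. I would compute the coefficient of $s$ in the associative product $(s\cdot p)\cdot p=s\cdot(p\cdot p)$ in two ways. By the triple symmetry $c^{s}_{wp}=c^{w}_{sp}$ this coefficient equals $\sum_{w}(c^{w}_{sp})^{2}$; expanding instead $p\cdot p=4\cdot1_\Omega+2p^\varphi+p^\psi$ and using $c^{s}_{sw}=c^{w}_{ss}$ it equals $4+4[p^\varphi=p]+2[p^\varphi=q]+2[p^\psi=p]+[p^\psi=q]$. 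By Lemma~\ref{lemma2.6}, since $p=s^\varphi$, the product $s\cdot p$ is either $2s+u_1+u_2$ (when $p^\varphi\ne s$) or $2s+2p$ (when $p^\varphi=s$), so $\sum_{w}(c^{w}_{sp})^{2}$ equals $6$ or $8$ respectively. Matching the two expressions and discarding the terms that cannot occur (in the first case $[p^\varphi=p]$ would overshoot and $[p^\psi=q]$ fails because $p\ne s$), the first case collapses to the dichotomy $p^\varphi=q$ \emph{or} $p^\psi=p$, while the second case survives only if $\psi$ fixes $p$. Hence the whole identity follows once one knows that $\psi$ is fixed-point-free on $S_3$: then $s\cdot p$ is always of the first shape and the dichotomy forces $p^\varphi=q$. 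Moreover $\varphi$ is then automatically fixed-point-free, since $s^\varphi=s$ would give $s^\psi=(s^\varphi)^\varphi=s^\varphi=s$, contradicting $s^\varphi\ne s^\psi$.

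I expect the fixed-point-freeness of $\psi$ — equivalently that each relation $s\in S_3$ is triangle-free, $c^{s}_{ss}=0$ — to be the genuine obstacle. The naive triple-product counts inside $\langle s\rangle$ turn out to be perfectly consistent with a fixed point, so this cannot be read off by associativity in the subalgebra generated by $s,s^\varphi,s^\psi$ alone; I anticipate having to invoke the full strength of Lemma~\ref{lemma2.6} on products of \emph{distinct} relations, organised through the plane structure used elsewhere in the paper, in order to exclude an $s$-triangle. Once this triangle-freeness is established, the bookkeeping of the previous paragraph closes all three assertions at once.
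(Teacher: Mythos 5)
The paper itself gives no proof of this lemma---it is quoted from \cite{P15}---so your argument can only be judged on its own terms. Your first paragraph is correct and clean: combining $c(s)=3$ (from the quoted pseudocyclicity theorem) with the decomposition $c(s)=3[s\in S_2]+2\left|\varphi^{-1}(s)\right|+\left|\psi^{-1}(s)\right|$ does force $\varphi,\psi\colon S_3\to S_3$ and their bijectivity. Be aware, though, that in \cite{P15} this lemma is established \emph{before} pseudocyclicity and is used on the way to it, so your derivation inverts the logical order of the source; it is legitimate here only because the present paper black-boxes both statements.

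The genuine gap is in the identity $s^\psi=(s^\varphi)^\varphi$: you reduce it to ``$\psi$ is fixed-point-free on $S_3$,'' leave that unproved, and---worse---that statement is false, so the plane-theoretic argument you anticipate cannot exist. In the rank-$3$ scheme of the Paley graph on $9$ points (the unique strongly regular graph with parameters $(9,4,1,2)$) one has $s\cdot s=4\cdot 1_\Omega+2p+s$ and $p\cdot p=4\cdot 1_\Omega+2s+p$, so $s^\psi=s$ and $p^\psi=p$; the lemma nevertheless holds there because $(s^\varphi)^\varphi=p^\varphi=s=s^\psi$. The repair lies inside your own case analysis. In your second case ($s=p^\varphi$ and $p=s^\varphi$) your count forces $s^\psi=s$, hence $q=s=p^\varphi$ and the identity already holds---you discarded this case instead of noticing it closes itself. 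In the first case, the unwanted branch ``$p^\psi=p$ and $p^\varphi\ne q$'' must be excluded not by fixed-point-freeness but by one more associativity computation: comparing the coefficient of $p^\varphi$ in $(s\cdot s)\cdot p$ and $s\cdot(s\cdot p)$ forces $c^{p^\varphi}_{su_1}=c^{p^\varphi}_{su_2}=2$, and by Lemma~\ref{lemma2.6} a coefficient $2$ in $s\cdot u_i$ can only sit on $s$ or on $u_i$, so $u_1=p^\varphi=u_2$, contradicting $u_1\ne u_2$. Finally, you nowhere rule out the degenerate possibility $s^\varphi=s$, in which Lemma~\ref{lemma2.6} does not apply to the pair $(s,s^\varphi)$ at all and the asserted identity would contradict $s^\varphi\ne s^\psi$; that case needs its own short argument before your bookkeeping can start.
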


Now, we can extend $\varphi$ and $\psi$ to be permutations of $S$ whose restrictions on the complement of $S_3$ is an identity map. \textit{i.e.} for any $s\in S_2$, $s^\varphi=s^\psi=s$.\\

\textbf{Construction of planes}

See \cite{P15} for the definition of a plane $\mathcal{P}$ on a 4-equivalenced association scheme ($\Omega,S$) and some basic properties.

The next definition is an extended version $\widetilde{\mathcal{P}}$ of the plane $\mathcal{P}$. 

\begin{definition}

Let $s\in S^{\#}$ and $\alpha\in \Omega$, $\beta,\gamma,\delta,\epsilon\in\alpha s$ with $\mathrm{r}(\beta,\delta)=\mathrm{r}(\gamma,\epsilon)=s^{\psi}$.

$\bullet$ For $s\in S_2$, we define a map
\[\widetilde{\mathcal{P}}{(\alpha,\beta,\gamma,\delta,\epsilon)_s}:\{(0,0),(1,0),(0,1),(-1,0),(0,-1)\}\rightarrow\Omega\textrm{~by~}(i,j)\mapsto\alpha_{i,j}\]
as follows:  $\alpha_{0,0}:=\alpha$, $\alpha_{1,0}:=\beta$, $\alpha_{0,1}:=\gamma$, $\alpha_{-1,0}:=\delta$, $\alpha_{0,-1}:=\epsilon$.\\

$\bullet$ For $s\in S_3$, we define a map
\[\widetilde{\mathcal{P}}{(\alpha,\beta,\gamma,\delta,\epsilon)_s}:\mathbb{Z}\times\mathbb{Z}\rightarrow\Omega\textrm{~by~}(i,j)\mapsto\alpha_{i,j}\]
as follows: 
\begin{enumerate}

  \item $\alpha_{0,0}:=\alpha$, $\alpha_{1,0}:=\beta$, $\alpha_{0,1}:=\gamma$, $\alpha_{-1,0}:=\delta$, $\alpha_{0,-1}:=\epsilon$.

  \item  Define $\alpha_{i,0}$ and $\alpha_{0,i}$ inductively for $|i|\geq2$.

       \[\alpha_{i,0}:=\begin{cases}\mathcal{P}(\alpha,\beta,\gamma)_s(i,0) \textrm{~if~} i \ge 2,\\
       \mathcal{P}(\alpha,\delta,\epsilon)_s(-i,0)\textrm{~if~}i \le -2.
       \end{cases}\]

      \[ \alpha_{0,i}:=\begin{cases}\mathcal{P}(\alpha,\beta,\gamma)_s(i,0) \textrm{~if~} i\ge 2,\\
               \mathcal{P}(\alpha,\delta,\epsilon)_s(0,-i)\textrm{~if~}i \le -2.
          \end{cases} \]

       \item

$$\alpha_{i,j}:=
\begin{cases}\mathcal{P}(\alpha,\beta,\gamma)_s(i,j) \textrm{~if~} i\ge 1, j\ge 1,\\

(\mathcal{P}(\alpha,\gamma,\delta)_s\circ \sigma)(i,j) \textrm{~if~}  i\le -1, j\ge 1,\\
(\mathcal{P}(\alpha,\delta,\epsilon)_s\circ \sigma^2)(i,j) \textrm{~if~}  i\le -1, j\le -1,\\
(\mathcal{P}(\alpha,\epsilon,\beta)_s\circ \sigma^3)(i,j) \textrm{~if~}  i\ge 1, j\le -1 .        
\end{cases} $$

where a map $\sigma:\mathbb{Z}\times \mathbb{Z}$ $\rightarrow$ $\mathbb{Z}\times \mathbb{Z}$ is defined by $(i,j)$ $\rightarrow$ $(-j,i)$.

\end{enumerate}

We can see that the map $\widetilde{\mathcal{P}}(\alpha,\beta,\gamma,\delta,\epsilon)_s$ is well-defined in \cite{P15}.
We say that the map $\widetilde{\mathcal{P}}(\alpha,\beta,\gamma,\delta,\epsilon)_s$ is an {\it $s$-plane}.\medskip

\end{definition}

For a given $s$-plane $\widetilde{\mathcal{P}}(\alpha,\alpha_1,\alpha_2,\alpha_3,\alpha_4)_s$, 
$\sigma$ acts on the $\widetilde{\mathcal{P}}(\alpha,\alpha_1,\alpha_2,\alpha_3,\alpha_4)_s$ by $\widetilde{\mathcal{P}}(\alpha,\alpha_1,\alpha_2,\alpha_3,\alpha_4)_s\circ \sigma$.\\

Let $\alpha\langle s \rangle$ be the set of all elements of a set $\Omega$ on the $s$-plane which has $\alpha$ as the $(0,0)$-th image.

\begin{theorem}\cite{P15} For $s\in S^{\#}$,  let $\alpha_{i,j}$ be an $(i,j)$-th point of an $s$-plane $\widetilde{\mathcal{P}}(\alpha,\beta,\gamma,\delta,\epsilon)_s$. \textit{i.e.} $\alpha_{i,j}=\widetilde{\mathcal{P}}(\alpha,\beta,\gamma,\delta,\epsilon)_s(i,j)$ where $(i,j)$ is in the domain of the $s$-plane. 

Then, $\mathrm{r}(\alpha,\alpha_{i,j})=\mathrm{r}(\alpha,\alpha_{k,l})$ if $(k,l)\in \{(i,j),(-j,i),(-i,-j),(j,-i)\}$

\end{theorem}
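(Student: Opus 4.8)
The plan is to exploit the fact that the rotation $\sigma$ is built directly into the quadrant-by-quadrant definition of the $s$-plane, so that applying $\sigma$ merely cycles one quadrant into the next while cyclically relabelling the four spokes $\beta,\gamma,\delta,\epsilon$. Since $\sigma^4=\mathrm{id}$, the set $\{(i,j),(-j,i),(-i,-j),(j,-i)\}$ is exactly the $\langle\sigma\rangle$-orbit of $(i,j)$, so it suffices to prove $\mathrm{r}(\alpha,\alpha_{i,j})=\mathrm{r}(\alpha,\alpha_{\sigma(i,j)})$ and then iterate. I would first normalise by choosing a representative $(i_0,j_0)$ of the orbit lying in the closed first quadrant ($i_0,j_0\ge 0$); every other orbit point is then $\sigma^m(i_0,j_0)$ for some $m\in\{0,1,2,3\}$ and lies in the $(m+1)$-st quadrant.

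The key step is a reduction to a single basic plane. By the defining formulas, a point in the $q$-th quadrant is obtained by evaluating one of the basic planes $\mathcal{P}(\alpha,p_q,p_{q+1})_s$ (with $p_1,\dots,p_4$ the cyclically listed spokes) at the point rotated back into the first quadrant by the appropriate power of $\sigma$. Carrying out this un-rotation on each orbit point, one checks that $\sigma^m(i_0,j_0)$, viewed inside the $(m+1)$-st quadrant, is sent back to the same grid point $(i_0,j_0)$; hence the four points $\alpha_{\sigma^m(i_0,j_0)}$, $m=0,1,2,3$, are precisely the images of the one grid point $(i_0,j_0)$ under the four basic planes $\mathcal{P}(\alpha,p_q,p_{q+1})_s$, all centred at $\alpha$ and using consecutive spoke-pairs drawn from $\alpha s$. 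The conditions $\mathrm{r}(\beta,\delta)=\mathrm{r}(\gamma,\epsilon)=s^{\psi}$, together with the symmetry of $(\Omega,S)$ guaranteed for even valency, ensure that each rotated spoke-pair again meets the requirements of the plane definition, so each of these four basic planes is legitimate.

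It then remains to invoke the basic-plane result of \cite{P15}: the relation $\mathrm{r}(\alpha,\mathcal{P}(\alpha,x,y)_s(i_0,j_0))$ depends only on the grid position $(i_0,j_0)$ and on $s$, not on the chosen spokes $x,y$ (this is the same independence that makes the set $\alpha\langle s\rangle$ well defined). Applying it to the four consecutive spoke-pairs gives $\mathrm{r}(\alpha,\alpha_{\sigma^m(i_0,j_0)})=\mathrm{r}(\alpha,\mathcal{P}(\alpha,\cdot,\cdot)_s(i_0,j_0))$ for every $m$, which is the desired equality. The main obstacle is this last ingredient combined with the bookkeeping of the second step: one must verify that the un-rotation lands every orbit point on the common first-quadrant grid point for all quadrants simultaneously, treat the degenerate orbits of the axis points $(i,0)$ and of the origin separately (these use the auxiliary axis-formulas but reduce identically), and make sure the spoke-independence of the center-relation is available in the precise positional form needed rather than only as a statement about the underlying point-set.
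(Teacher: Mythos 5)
The paper offers no proof of this statement: it is imported verbatim from \cite{P15}, so there is no internal argument to compare yours against. Judged on its own terms, your orbit bookkeeping is correct: $\{(i,j),(-j,i),(-i,-j),(j,-i)\}$ is the $\langle\sigma\rangle$-orbit, and (modulo the fact that the displayed definition must be read with $\sigma^{-m}$ rather than $\sigma^{m}$ for the un-rotation actually to land in the first quadrant) the four off-axis orbit points are indeed the images of one first-quadrant grid point under the four basic planes $\mathcal{P}(\alpha,\beta,\gamma)_s$, $\mathcal{P}(\alpha,\gamma,\delta)_s$, $\mathcal{P}(\alpha,\delta,\epsilon)_s$, $\mathcal{P}(\alpha,\epsilon,\beta)_s$.

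The gap is that this reduction is the theorem, not a proof of it. The ``spoke-independence of the center relation'' you invoke is not a separate, weaker fact you may borrow: for off-axis points it is literally the statement being proved, rewritten through the definition of $\widetilde{\mathcal{P}}$. The only independence statement visible in this paper is the well-definedness of $\alpha\langle s\rangle$ as a point set, and that does not yield the positional form you need, since a priori a point could occur at different grid positions in two planes with the same center; you concede exactly this in your closing sentence, which leaves the sole nontrivial step unargued. The axis case also does not ``reduce identically'': depending on how one repairs the (evidently garbled) formula for $\alpha_{0,i}$, comparing $\alpha_{i,0}$ with $\alpha_{0,i}$ may require the symmetry $\mathrm{r}(\alpha,\mathcal{P}(\alpha,\beta,\gamma)_s(i,0))=\mathrm{r}(\alpha,\mathcal{P}(\alpha,\beta,\gamma)_s(0,i))$ \emph{within a single basic plane}, which spoke-independence says nothing about. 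A genuine proof has to proceed by induction on $|i|+|j|$, using $c_{ss}^{s^{\varphi}}=2$ and $c_{ss}^{s^{\psi}}=1$ to pin down each newly constructed point and to compute $\mathrm{r}(\alpha,\alpha_{i,j})$ simultaneously with the rotational symmetry; none of that inductive content appears in your sketch.
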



\section{Every $4$-equivalenced association scheme is Frobenius}

\begin{example}
Let $\Omega$ be a set with $|\Omega|=5$ and $S=\{1_\Omega, s\}$ where $s=\Omega \times \Omega -1_\Omega$. It is easy to see that $(\Omega,S)$ is a 4-equivalenced and Frobenius. 
\end{example}

Note that $S=\langle s\rangle$ for some $s\in S$ or there exists the elements $s$ and $t$ of $S$ such that $s \not \in \langle t \rangle$ and $t \not \in \langle s \rangle$. So, for $s,t\in S$, if  $s \not \in \langle t \rangle$ and $t \not \in \langle s \rangle$, we notate it by $s\wr t$.

\begin{lemma}\label{lemma3.7}
For  $s, t\in S$ with $s\wr t$, then $s \cdot t =a_1+a_2+a_3+a_4$ where $a_i$'s are the distinct elements of $S$ and not in $\langle t\rangle \cup \langle s\rangle$.
\end{lemma}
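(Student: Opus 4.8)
The plan is to combine Lemma~\ref{lemma2.6} with the fact that $k=4$ is even, which by the theorem of \cite{ZYM03} forces $(\Omega,S)$ to be symmetric, so that $s^{*}=s$ and $t^{*}=t$ throughout. First I would observe that the hypothesis $s\wr t$ already guarantees $s,t\in S^{\#}$ and $s\neq t$: every closed subset contains $1_\Omega$, so $1_\Omega\in\langle t\rangle$ rules out $s=1_\Omega$, and $t\in\langle t\rangle$ rules out $s=t$. Hence Lemma~\ref{lemma2.6} is applicable to the pair $(s,t)$, and by valency $s\cdot t$ has total valency $n_s n_t=16$.

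The first real step is to decide which of the three cases of Lemma~\ref{lemma2.6} occurs. The key observation is that $s^{\varphi},s^{\psi}\in\langle s\rangle$: indeed $s\cdot s=4\cdot 1_\Omega+2s^{\varphi}+s^{\psi}$, and since $\langle s\rangle$ is closed it contains every relation occurring in $s\cdot s^{*}=s\cdot s$. Consequently, if $t=s^{\varphi}$ then $t\in\langle s\rangle$, and if $s=t^{\varphi}$ then $s\in\langle t\rangle$; both contradict $s\wr t$. Therefore $s\neq t^{\varphi}$ and $t\neq s^{\varphi}$, so case (i) of Lemma~\ref{lemma2.6} applies and $s\cdot t=a_1+a_2+a_3+a_4$ with the $a_i$ distinct.

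It then remains to show that no $a_i$ lies in $\langle s\rangle\cup\langle t\rangle$. Here I would use the standard triangle reciprocity for intersection numbers: the statement $a_i\in st$ (i.e. $c_{st}^{a_i}\neq 0$) is witnessed by a triangle with edge-relations $s$, $t$, $a_i$, and the same triangle shows equivalently that $t\in s^{*}a_i$ and $s\in a_i t^{*}$. By symmetry these read $t\in sa_i$ and $s\in a_i t$. Now suppose $a_i\in\langle s\rangle$; since $\langle s\rangle$ is closed and contains $s$, it contains the product $sa_i$, whence $t\in\langle s\rangle$, contradicting $s\wr t$. Symmetrically, if $a_i\in\langle t\rangle$ then $s\in a_i t\subseteq\langle t\rangle$, again a contradiction. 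Thus each $a_i\notin\langle s\rangle\cup\langle t\rangle$, which is exactly the assertion.

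The argument is short, so the only point demanding care is the reciprocity step: one must correctly invoke that $c_{st}^{a_i}\neq 0$ is equivalent both to $t\in s^{*}a_i$ and to $s\in a_i t^{*}$, and that a closed subset is closed under the relational product. Once symmetry collapses the stars, each direction follows immediately from closedness, and I anticipate no further obstacle.
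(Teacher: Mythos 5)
Your proof is correct, but the first half takes a different route from the paper. To see that $s\cdot t$ splits into four distinct relations, the paper computes the Hermitian product $\langle s\cdot t,\, s\cdot t\rangle=\langle s\cdot s,\, t\cdot t\rangle=16$: since every nonzero $c_{st}^{w}$ contributes $(c_{st}^{w})^{2}n_{w}$ to the left side while $\sum_{w}c_{st}^{w}n_{w}=16$ with $n_{w}=4$, equality forces all coefficients to be $0$ or $1$; the value $16$ on the right side rests on $ss\cap tt=\{1_{\Omega}\}$, which the paper leaves implicit (it needs the bijectivity of $\varphi$ together with $s\wr t$). You instead invoke Lemma~\ref{lemma2.6} and rule out cases (ii) and (iii) by observing that $s^{\varphi},s^{\psi}\in ss\subset\langle s\rangle$, so $t=s^{\varphi}$ or $s=t^{\varphi}$ would contradict $s\wr t$; this is cleaner and every step is justified, at the cost of leaning on the prior classification of products rather than a self-contained counting identity. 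For the second half (no $a_{i}$ lies in $\langle s\rangle\cup\langle t\rangle$) your triangle-reciprocity argument --- $a_{i}\in st$ gives $t\in sa_{i}$ and $s\in a_{i}t$, and closedness of $\langle s\rangle$, $\langle t\rangle$ under products then forces $t\in\langle s\rangle$ or $s\in\langle t\rangle$ --- is exactly the mechanism the paper compresses into its final clause, so there you agree; your version simply spells out the reciprocity and the use of symmetry that the paper omits.
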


\begin{proof}
Since $\langle s \cdot t, s\cdot t\rangle =\langle s \cdot s, t\cdot t\rangle =16$ and if $a_i \in  \langle t\rangle \cup \langle s\rangle$, then $s \in \langle t\rangle$ or $t \in \langle s\rangle$.\\
\end{proof}

\begin{lemma}\label{lemma3.8}
For $s, t\in S_3$ with $s\wr t$, we have $|t^{\varphi}s^{\varphi}\cap t^{\psi}s^{\psi}|\le 1$.
\end{lemma}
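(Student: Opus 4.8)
The plan is to reduce the statement to a single Hermitian inner product and then to the combinatorics of the closed subset $\gn{s}\cap\gn{t}$. First I would record two structural facts. Since $k=4$ is even, $(\Omega,S)$ is symmetric by \cite{ZYM03}, hence commutative and every adjacency matrix is self-adjoint. Also, for $r\in S_3$ one has $r^\varphi\in r\PD r\subseteq\gn{r}$, and because $\varphi$ is a permutation of the finite set $S_3$ it has finite order, so iterating gives $\gn{r}=\gn{r^\varphi}=\gn{r^\psi}$. Applying this to $s$ and $t$ and using $s\wr t$, I get $s^\varphi\notin\gn{t}$, $t^\varphi\notin\gn{s}$, and likewise for $\psi$; in particular $s^\varphi\wr t^\varphi$ and $s^\psi\wr t^\psi$. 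By Lemma~\ref{lemma3.7} each of $t^\varphi\PD s^\varphi$ and $t^\psi\PD s^\psi$ is a multiplicity-free sum of four relations lying outside $\gn{s}\cup\gn{t}$; write $P$ and $Q$ for their supports, so that $|P|=|Q|=4$ and the quantity to be bounded is exactly $|P\cap Q|$.

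Next I would pass to the inner product. Because the scheme is symmetric and commutative, for any relations $a,b,c,d$ one has $\langle a\PD b,\,c\PD d\rangle=\langle a\PD c,\,b\PD d\rangle$ (expand each side as a trace and permute the commuting self-adjoint matrices). Taking $a=t^\varphi$, $b=s^\varphi$, $c=t^\psi$, $d=s^\psi$ gives $\langle t^\varphi\PD s^\varphi,\,t^\psi\PD s^\psi\rangle=\langle t^\varphi\PD t^\psi,\,s^\varphi\PD s^\psi\rangle$. Since all non-diagonal valencies equal $4$ and the adjacency matrices are orthogonal, the left-hand side equals $4\,|P\cap Q|$, so it remains to evaluate the right-hand side. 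Now $t^\varphi\PD t^\psi$ is supported in $\gn{t}$ and $s^\varphi\PD s^\psi$ in $\gn{s}$, so only relations in $\gn{s}\cap\gn{t}$ contribute. Because $t^\psi=(t^\varphi)^\varphi$, the pair $(t^\varphi,t^\psi)$ falls into case (2) or (3) of Lemma~\ref{lemma2.6}: either $t^\varphi\PD t^\psi=2t^\varphi+w_1+w_2$ or $t^\varphi\PD t^\psi=2t^\varphi+2t^\psi$, and similarly $s^\varphi\PD s^\psi=2s^\varphi+v_1+v_2$ or $2s^\varphi+2s^\psi$. In case (3) the support is $\{t^\varphi,t^\psi\}\subseteq\gn{t}\setminus\gn{s}$, which meets nothing on the $s$-side, so $|P\cap Q|=0$; since $t^\varphi,t^\psi,s^\varphi,s^\psi$ are each excluded from the opposite closed subset, the only possible common relations are the \emph{new} pair, giving $|P\cap Q|=|\{w_1,w_2\}\cap\{v_1,v_2\}|\le 2$.

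The hard part will be to exclude $|P\cap Q|=2$. Equality would force $\{w_1,w_2\}=\{v_1,v_2\}$, i.e. $t^\varphi\PD t^\psi-2t^\varphi=s^\varphi\PD s^\psi-2s^\varphi$, an element of valency $8$ supported in the closed subset $D:=\gn{s}\cap\gn{t}$ that is simultaneously the off-diagonal part of $t^\varphi\PD t^\psi$ and of $s^\varphi\PD s^\psi$. All the numerical identities available here (valencies, the swap above, and the norms $\langle t^\varphi\PD t^\psi,\,t^\varphi\PD t^\psi\rangle$) turn out to be consistent with this, so the contradiction must come from geometry rather than from counting. My plan is to use the plane machinery of \cite{P15}: reading $w_1,w_2\in t^\varphi\PD t^\psi$ through a $t$-plane shows that $t^\varphi$ and $t^\psi$ lie in a single $D$-coset, and the analogous statement holds for $s$; then, fixing a pair $(\alpha,\gamma)$ in a would-be common relation $w\in P\cap Q$ and realizing both a $t^\varphi s^\varphi$-path and a $t^\psi s^\psi$-path from $\alpha$ to $\gamma$ inside the corresponding $s$- and $t$-planes, the incidence relation of the last quoted theorem on $s$-planes pins the intermediate points down and forbids the pair $\{w_1,w_2\}$ from recurring for both $s$ and $t$.

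I expect this plane-incidence step to be the main obstacle, and the only place where the full strength of the construction in \cite{P15} is needed; the reduction in the first two paragraphs is routine, and everything after it is the problem of showing that the shared \emph{new} pair cannot occur, which is genuinely a statement about how two incomparable $S_3$-generated closed subsets can overlap.
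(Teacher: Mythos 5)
Your reduction is exactly the paper's: the same swap identity $\langle s^{\varphi}\cdot t^{\varphi},\,s^{\psi}\cdot t^{\psi}\rangle=\langle s^{\varphi}\cdot s^{\psi},\,t^{\varphi}\cdot t^{\psi}\rangle$, the same appeal to Lemma~\ref{lemma2.6} to write $s^{\varphi}\cdot s^{\psi}=2s^{\varphi}+a_1+a_2$ (or $2s^{\varphi}+2s^{\psi}$) and likewise for $t$, and the same conclusion that two common relations would force the two ``new'' pairs to coincide, say $\{a_1,a_2\}=\{b_1,b_2\}\subset\langle s\rangle\cap\langle t\rangle$. Up to that point the argument is sound (and you are right that $\langle r\rangle=\langle r^{\varphi}\rangle$ is needed to invoke Lemma~\ref{lemma3.7} for $t^{\varphi}\cdot s^{\varphi}$).

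The gap is that you stop there: the step you yourself flag as ``the hard part'' is left as a plan, and the plan you sketch (tracing a $t^{\varphi}s^{\varphi}$-path and a $t^{\psi}s^{\psi}$-path between a fixed pair in a common relation $w$ and hoping the plane incidence theorem pins down the intermediate points) is not the argument that closes the proof, and it is not clear it can be made to work as stated. What is actually needed is a single, concrete consequence of the plane construction in \cite{P15}: if $s^{\varphi}\cdot s^{\psi}=2s^{\varphi}+a_1+a_2$, then $(s^{\varphi})^{\psi}\in a_1a_2$ (read off the relations on the $s^{\varphi}$-plane), and symmetrically $(t^{\varphi})^{\psi}\in b_1b_2$. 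Once $\{a_1,a_2\}=\{b_1,b_2\}$, both $a_1$ and $a_2$ lie in $\langle t\rangle$, so $a_1a_2\subset\langle t\rangle$, hence $(s^{\varphi})^{\psi}\in\langle t\rangle$ and therefore $s\in\langle(s^{\varphi})^{\psi}\rangle\subset\langle t\rangle$, contradicting $s\wr t$; no path-tracing through a common relation is required. (The degenerate case $s^{\varphi}\cdot s^{\psi}=2s^{\varphi}+2s^{\psi}$ is disposed of as you say, since its support misses $\langle t\rangle$.) Without identifying and proving the membership $(s^{\varphi})^{\psi}\in a_1a_2$, the proof is incomplete.
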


\begin{proof}
If not, $\langle s^{\varphi}\cdot t^{\varphi},s^{\psi}\cdot t^{\psi}\rangle=\langle s^{\varphi}\cdot s^\psi, t^{\varphi}\cdot t^\psi \rangle=\langle 2s^{\varphi}+a_1+a_2 $ or $ 2s^{\varphi}+2s^{\psi}, 2t^{\varphi}+b_1+b_2$ or $ 2t^{\varphi}+2t^{\psi} \rangle \ge 8$ by Lemma \ref{lemma2.6}. It must be $a_1=b_1$ and $a_2=b_2$ or $a_1=b_2 $ and $ a_2=b_1$. But, here $(s^\varphi)^{\psi}\in a_1a_2\subset\langle t \rangle$ and $(t^\varphi)^{\psi}\in b_1b_2\subset\langle s \rangle$ by the relations on the $s^\varphi$-plane and the $t^\varphi$-plane. It contradicts to our assumption.\\
\end{proof}

\begin{definition} For $
\alpha \in \Omega$ and $s,t\in S^{\#}$, $s \sim_\alpha t$ if there exist a pair of $s$-plane and $t$-plane which have $\alpha$ as the $(0,0)$-th image such that all relations on  $\alpha\langle s\rangle$ $\times$ $\alpha\langle t\rangle$ are invariant under the map $\sigma$.


\end{definition}

Choose a point $\alpha \in \Omega$.

\begin{theorem}\label{theorem3.10}
For $s \in S_3,$ $ t\in S^{\#}$ with $s\wr t$, we have $s \sim_\alpha t$

\end{theorem}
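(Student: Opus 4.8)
The plan is to fix once and for all an $s$-plane $\widetilde{\mathcal{P}}(\alpha,\beta,\gamma,\delta,\epsilon)_s$ and a $t$-plane $\widetilde{\mathcal{P}}(\alpha,\beta',\gamma',\delta',\epsilon')_t$, both with $\alpha$ as their $(0,0)$-th image, and to show that after a suitable reorientation of the two planes every relation on $\alpha\langle s\rangle\times\alpha\langle t\rangle$ is fixed by $\sigma$, i.e. $\mathrm{r}(\alpha_{i,j},\alpha'_{k,l})=\mathrm{r}(\alpha_{\sigma(i,j)},\alpha'_{\sigma(k,l)})$ for all admissible $(i,j),(k,l)$. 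Since $\alpha\langle s\rangle\times\alpha\langle t\rangle$ takes one point from each plane, the relations in question are exactly these \emph{cross-relations}, so no internal-plane bookkeeping is needed. I would organize the argument by rings, settling the innermost ring $\alpha s\times\alpha t$ first and then propagating outward.

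For the first ring, $s\wr t$ lets me invoke Lemma~\ref{lemma3.7} to write $s\cdot t=a_1+a_2+a_3+a_4$ with the $a_i$ distinct and outside $\langle s\rangle\cup\langle t\rangle$; hence every first-ring cross-relation is one of the $a_i$. The scheme is symmetric (being $4$-equivalenced), so it is commutative, and the transfer identity $n_{a_i}c_{st}^{a_i}=n_s c_{a_i t}^{s}$ together with $n_s=n_t=n_{a_i}=4$ and $c_{st}^{a_i}=1$ gives $c_{a_i t}^{s}=c_{s a_i}^{t}=1$. Thus, fixing one point of $\alpha s$, each $a_i$ occurs exactly once among its four relations to $\alpha t$, and symmetrically; so the $4\times4$ array $M_{pq}=\mathrm{r}(\sigma^p\beta,\sigma^q\beta')$ is a Latin square on the symbols $a_1,\dots,a_4$. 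The first-ring $\sigma$-invariance is then precisely the statement that, after a cyclic reorientation of the $t$-plane, $M$ is a circulant $M_{pq}=f(q-p)$.

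The step I expect to be the main obstacle is showing that $M$ can indeed be made circulant. Order-$4$ Latin squares fall into two isotopy classes, corresponding to $\mathbb{Z}/4$ and to the Klein four-group, and only the former can be reoriented into circulant form; so I must rule out the Klein-four pattern. To do this I would pass to the second ring: on the $s$-plane the axial point $\alpha_{2,0}$ and the diagonal point $\alpha_{1,1}$ carry the relations $s^\psi$ and $s^\varphi$ to $\alpha$ (similarly for $t$), and evaluating a second-ring cross-relation such as $\mathrm{r}(\alpha_{1,1},\alpha'_{1,1})$ along its two natural two-step paths expresses it through the products $t^\varphi s^\varphi$ and $t^\psi s^\psi$. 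A Klein-four first-ring pattern would force two \emph{distinct} relations common to these products, i.e. $|t^\varphi s^\varphi\cap t^\psi s^\psi|\ge 2$, contradicting Lemma~\ref{lemma3.8}. Hence $M$ is of cyclic type, and a single cyclic reorientation of the $t$-plane makes the first ring $\sigma$-invariant.

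Finally I would propagate outward by induction on the ring index using the recursive definition of $\widetilde{\mathcal{P}}$: each new point is obtained from already-placed points by a quadrant map $\mathcal{P}(\alpha,\cdot,\cdot)_s$ composed with a power of $\sigma$, so its cross-relations are forced by intersection numbers from cross-relations already shown to be $\sigma$-invariant, while \cite{P15} controls the within-plane data. Here Lemma~\ref{lemma3.8} re-enters as a consistency statement: the bound $|t^\varphi s^\varphi\cap t^\psi s^\psi|\le 1$ guarantees that the two ways of reaching a given point give the \emph{same} forced relation, so the one cyclic orientation chosen for the first ring stays compatible at every radius. For the case $t\in S_3$ both planes are two-dimensional and this argument applies directly; for $t\in S_2$ the $t$-plane is only $\{\alpha\}\cup\alpha t$, Lemma~\ref{lemma3.8} is unavailable, but $t^\varphi=t^\psi=t$ removes the second-ring ambiguity on the $t$-side and the circulant matching follows from the Latin-square structure of $M$ and the $t^\psi$-pairing of $\alpha t$ alone. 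The subtlest point throughout is exactly this simultaneous compatibility: that a single reorientation of the $t$-plane works at all radii at once, which is what the $\le 1$ bound of Lemma~\ref{lemma3.8} secures.
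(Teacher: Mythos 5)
Your overall architecture coincides with the paper's: first pin down the relations on the innermost ring $\alpha s\times\alpha t$ (the paper's STEP 1), then propagate outward along the two planes (STEPS 2 and 3). The Latin-square formulation of the first ring is correct, and you have correctly isolated the crux of the whole theorem: one must exclude the Klein-four pattern, since only the cyclic pattern can be reoriented into a $\sigma$-invariant (circulant) one.

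The gap is in how you exclude it. You assert that a Klein-four first ring forces two distinct elements into $t^{\varphi}s^{\varphi}\cap t^{\psi}s^{\psi}$ and hence contradicts Lemma~\ref{lemma3.8}, but you give no derivation, and the assertion fails in exactly the hard case. Your two-step paths only place $\mathrm{r}(\alpha_{1,1},\alpha'_{1,1})$ in $s^{\varphi}t^{\varphi}$ (via $\alpha$); the relation lying in $s^{\psi}t^{\psi}$ is $\mathrm{r}(\alpha_{2,0},\alpha'_{2,0})$, a \emph{different} pair of points, so no common element of the two products is produced, let alone two distinct ones. Indeed, in the Klein-four configuration the paper distinguishes sub-cases by how $t^{\varphi},t^{\psi}$ sit inside the products $a_ia_j$: Lemma~\ref{lemma3.8} disposes only of the sub-cases (b) and (c), while in the surviving sub-case (a) the memberships of $t^{\varphi},t^{\psi}$ exactly parallel those of $s^{\varphi},s^{\psi}$ and Lemma~\ref{lemma3.8} yields only the weak constraint that at most one pair satisfies $a_k^{\varphi}=a_l$ and at most one $a_i$ lies in $S_2$. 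Killing that sub-case requires the classification of the products $a_ia_j$ in Lemmas~\ref{lemma3.11} and~\ref{lemma3.12} together with an auxiliary point $\gamma$ satisfying $\mathrm{r}(\beta_3,\gamma)=a_3$, $\mathrm{r}(\gamma,\beta_4)=a_2$ --- none of which your sketch supplies or replaces. The case $t\in S_2$ is a second gap: there all internal relations of $\alpha t$ equal $t$, so the ``$t^{\psi}$-pairing of $\alpha t$'' is vacuous and cannot distinguish the Klein-four square from the cyclic one; the paper must again run the $a_i$-product analysis (using that the $a_i$ lie in $S_3$ and are not in $\langle s\rangle\cup\langle t\rangle$, via Lemma~\ref{lemma3.7}) to finish. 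Your outward-propagation step is in outline the paper's STEPS 2 and 3 and is plausible, but as written the proof is missing its central ingredient.
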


\begin{proof}
\textbf{STEP 1)}
By Lemma ~\ref{lemma3.7}, $s\cdot t=a_1+a_2+a_3+a_4$ for some distinct $a_i$'s of $S$ with $a_i\not\in \langle s\rangle \cup\langle t\rangle$. Take $\alpha_1,\alpha_3\in \alpha s$ with $\mathrm{r}(\alpha_1,\alpha_3)=s^{\psi}$ and $\beta_1,\beta_3\in \alpha t$ with $\mathrm{r}(\beta_1,\alpha_1)=\mathrm{r}(\beta_3,\alpha_3)=a_1$. Let $a_3:=\mathrm{r}(\alpha_1,\beta_3)$. \\

Suppose $a_2:=\mathrm{r}(\alpha_3,\beta_1)\ne a_3$. 
For $\beta_2,\beta_4 \in \alpha t$, we may assume that $\mathrm{r}(\alpha_1,\beta_2)=a_2$, $\mathrm{r}(\alpha_1,\beta_4)=a_4$, $\mathrm{r}(\alpha_3,\beta_2)=a_4$, $\mathrm{r}(\alpha_3,\beta_4)=a_3$. 
Take $\alpha_2, \alpha_4\in \alpha s$ with $\mathrm{r}(\beta_2,\alpha_2)=\mathrm{r}(\beta_4,\alpha_4)=a_1$. See the left figure in Figure~\ref{figure1}.
The relations $a_1,a_2,a_3$ and $a_4$ are represented by the red, blue, green and brown lines in Figure \ref{figure1}. Using $s\cdot t=a_1+a_2+a_3+a_4$, we get $\mathrm{r}(\beta_1,\alpha_2)=\mathrm{r}(\beta_2,\alpha_4)=a_3$. Then, $s^{\varphi},s^\psi$ $\in a_1a_3$. Since $\langle s^\varphi \cdot a_1, s^\psi \cdot a_1\rangle \ge 4$, $a_1\in \langle s\rangle$. So, $\mathrm{r}(\alpha_3,\beta_1)=a_3$. \\

For some $\alpha_2,\alpha_4\in \alpha s$ with $\mathrm{r}(\alpha_2,\alpha_4)=s^\psi$, $\mathrm{r}(\beta_2,\alpha_2)=\mathrm{r}(\beta_4,\alpha_4)=a_1$(and then $\mathrm{r}(\beta_2,\alpha_4)=\mathrm{r}(\beta_4,\alpha_2)=a_3$). So, $\mathrm{r}(\beta_1,\alpha_2)$ is $a_2$ or $a_4$. \\
Suppose $\mathrm{r}(\beta_1,\alpha_2)=a_2$ (and then all the relations on $\alpha s\times \alpha t$ are determined by it). See the right figure in Figure~\ref{figure1}.
Now, we will get some properties of relations $s$, $t$ and $a_i$'s depending on the condition of $t$.\\

Note that $s^{\varphi}\in  a_1a_2\cap a_3a_4  \cap a_1a_4\cap a_2a_3$ and $ s^{\psi}\in a_1a_3\cap a_2a_4$ always hold. \\

\begin{figure}[h]

\begin{tikzpicture}
  \matrix (m) [matrix of math nodes,row sep=0.3cm,column sep=0.3cm]
  { &  & \alpha_1 & & & & &  & \alpha_1 & &  \\
 	 & \beta_4 & & \beta_1& & &  & \beta_4 & & \beta_1&  \\
 	\alpha_4 & & \alpha &   & \alpha_2 & & \alpha_4 & & \alpha &   & \alpha_2 \\
	 & \beta_3 &   & \beta_2  & & &  & \beta_3 &   & \beta_2  & \\
	  & & \alpha_3 &   & & &  & & \alpha_3 &   &\\};
  \path[dashed,-]
    (m-3-3) edge (m-1-3)
    (m-3-3) edge (m-5-3)
    (m-3-3) edge (m-3-1)
    (m-3-3) edge (m-3-5)
    ;

  \path[-]
    (m-3-3) edge (m-2-2)
    (m-3-3) edge (m-2-4)
    (m-3-3) edge (m-4-2)
    (m-3-3) edge (m-4-4)
    ;

 \path[-,color=red]
   (m-1-3) edge (m-2-4)
   (m-3-5) edge (m-4-4)
   (m-5-3) edge (m-4-2)
   (m-3-1) edge (m-2-2);

 \path[-,color=blue]
   (m-1-3) edge (m-4-4)

   (m-5-3) edge (m-2-4);

    \path[-,color=green]
   (m-1-3) edge (m-4-2)

   (m-5-3) edge (m-2-2);

    \path[-,color=brown]
   (m-1-3) edge (m-2-2)

   (m-5-3) edge (m-4-4);

     \path[dashed,-]
    (m-3-9) edge (m-1-9)
    (m-3-9) edge (m-5-9)
    (m-3-9) edge (m-3-7)
    (m-3-9) edge (m-3-11)
    ;

  \path[-]
    (m-3-9) edge (m-2-8)
    (m-3-9) edge (m-2-10)
    (m-3-9) edge (m-4-8)
    (m-3-9) edge (m-4-10)
    ;

 \path[-,color=red]
   (m-1-9) edge (m-2-10)
   (m-3-11) edge (m-4-10)
   (m-5-9) edge (m-4-8)
   (m-3-7) edge (m-2-8);

 \path[-,color=blue]
   (m-1-9) edge (m-4-10)
   (m-3-11) edge (m-2-10)
   (m-5-9) edge (m-2-8)
   (m-3-7) edge (m-4-8);
   
    \path[-,color=green]
   (m-1-9) edge (m-4-8)
   (m-3-11) edge (m-2-8)
   (m-5-9) edge (m-2-10)
   (m-3-7) edge (m-4-10);
   
    \path[-,color=brown]
   (m-1-9) edge (m-2-8)
   (m-3-11) edge (m-4-8)
   (m-5-9) edge (m-4-10)
   (m-3-7) edge (m-2-10);
   
\end{tikzpicture} 

\caption{}\label{figure1}
\end{figure}
 
$\bullet$ {\it case 1)}  $t\in S_2$\\
Since $t\cdot a_i=s+\sum_{j\ne i}a_j$,  $\langle s^{\varphi}\cdot a_i,  t\cdot a_i \rangle\ge8, \langle s^{\psi}\cdot a_i, t\cdot a_i\rangle\ge 4 $ 
and $\langle s^{\varphi}\cdot t, s^{\psi} \cdot t \rangle=0$.
 So, $s^{\varphi} \cdot t=\sum_{i=1}^4a_i^{\varphi},$ $s^{\psi} \cdot t=\sum_{i=1}^4a_i^{\psi},$ $a_i\in S_3 $ and there is no $(i,j)$ such that $a_i^{\varphi}=a_j\cdots$(5).\\

$\bullet$ {\it case 2)} $t\in S_3$ 

Note that exactly one of the following must hold.\\

(a) $\mathrm{r}(\beta_1,\beta_2)=\mathrm{r}(\beta_1,\beta_4)=t^{\varphi} \in a_1a_2\cap a_3a_4\cap a_1a_4\cap a_2a_3$ and $\mathrm{r}(\beta_1,\beta_3)= t^{\psi}\in a_1a_3\cap a_2a_4$

(b) $\mathrm{r}(\beta_1,\beta_2)=\mathrm{r}(\beta_1,\beta_3)=t^{\varphi} \in a_1a_2\cap a_3a_4\cap a_1a_3\cap a_2a_4 $ and $\mathrm{r}(\beta_1,\beta_4)=t^{\psi}\in a_1a_4\cap a_2a_3$

(c) $\mathrm{r}(\beta_1,\beta_3)=\mathrm{r}(\beta_1,\beta_4)=t^{\varphi} \in a_1a_3\cap a_2a_4\cap a_1a_4\cap a_2a_3$ and $\mathrm{r}(\beta_1,\beta_2)= t^{\psi}\in a_1a_2\cap a_3a_4$.\\

Suppose $(a)$ holds. \\
Then, for $i=1,2,3,4,$ $8\le \langle a_i\cdot s^{\varphi}, a_i\cdot t^{\varphi}\rangle, 4\le \langle a_i\cdot s^{\psi},a_i\cdot t^{\psi}\rangle$$\cdots (1)$.

If $c_{s^{\varphi}t^{\varphi}}^{a_i^{\psi}}=2$, then $a_i\in \langle s\rangle \cup \langle t\rangle$. By Lemma~\ref{lemma3.7}, $c_{t^{\varphi}s^{\varphi}}^{a_i^{\varphi}}\ge1$ for $i=1,2,3,4$. So, $s^{\varphi}\cdot t^{\varphi}=a_1^{\varphi}+a_2^{\varphi}+a_3^{\varphi}+a_4^{\varphi}\cdots$ (2) since $a_i$'s are distinct. 

By $(1)$, $a_i^{\varphi}$ or $a_i^{\psi}\in t^{\psi}s^{\psi}\cdots (3)$.
By $(2), (3)$ and Lemma ~\ref{lemma3.8}, there is at most one pair $(k,l)$ such that $a_k^{\varphi}=a_l$ and there is at most one $a_i\in S_2$ $\cdots(4)$.\\

Suppose $(b)$ or $ (c)$. \\For $i=1,2,3,4,$ 
$\langle a_i\cdot s^{\varphi},a_i\cdot t^{\varphi}\rangle, \langle a_i\cdot s^{\psi},a_i\cdot t^{\varphi}\rangle, \langle a_i\cdot s^{\varphi},a_i\cdot t^{\psi}\rangle \ge 4.$ So, for $i=1,2,3,4,$ $a_i^{\varphi}$ or $a_i^{\psi} \in s^{\varphi}t^{\varphi}$, $a_i^{\varphi}$ or $a_i^{\psi} \in s^{\varphi}t^{\psi}$ and $a_i^{\varphi}$ or $a_i^{\psi} \in s^{\psi}t^{\varphi}.$ 

Because $\langle s^{\varphi}\cdot t^{\varphi},s^{\psi}\cdot t^{\varphi}\rangle=\langle s^{\varphi}\cdot t^{\varphi},s^{\varphi}\cdot t^{\psi}\rangle=0$, 
 for $i=1,2,3,4$ $a_i^{\varphi^{j_i}}\in s^{\varphi}t^{\psi}\cap s^{\psi}t^{\varphi}$ for some $j_i\in\{1,2\}$ and $a_i\in S_3$.  We get $|s^{\varphi}t^{\psi}\cap s^{\psi}t^{\varphi}|\ge2 $ since $a_i$'s are distinct. But it contradicts to Lemma~\ref{lemma3.8}. So, $(b)$ and $(c)$ cannot happen.\\

\begin{lemma}\label{lemma3.11}
For distinct $i,j,k,l\in$ $\{1,2,3,4\}$, we have $a_i^{\varphi^m}, a_j^{\varphi^n}\in a_ka_l$ 
for some $m,n \in \{1,2\}$ where $i+j \equiv k+l \equiv 0 (mod 2)$. And, for any distinct ${i'}, {j'}\in $$\{1,2,3,4\}$, $s^{\varphi^{m'}},t^{\varphi^{n'}} \in a_{{i'}}a_{{j'}}$ for some ${m'},{n'}\in\{1,2\}.$

\end{lemma}

\begin{proof}

There is $\gamma_1\in \Omega$ such that $\mathrm{r}(\beta_1,\gamma_1)=a_1, \mathrm{r}(\gamma_1,\beta_2)=a_4 $ since $c_{a_1a_4}^{t^{\varphi}} \ge 1  $.\\Then, $\langle a_1\cdot a_1, a_2\cdot a_4\rangle, \langle a_4\cdot a_4,a_1\cdot a_3\rangle>0,$ so, there is $m_1,n_1 \in \{1,2\}$ such that $a_4^{\varphi^{m_1}}\in a_1a_3,$
$a_1^{\varphi^{m_2}}\in a_2a_4$.  Similarly, we can choose $\gamma_2, \gamma_3\in\Omega$ such that $\mathrm{r}(\beta_2,\gamma_2)=a_4, \mathrm{r}(\gamma_2,\beta_3)=a_3,$ $\mathrm{r}(\beta_4,\gamma_3)=a_2$ and $\mathrm{r}(\gamma_3, \beta_1)=a_1$. Then, $a_2^{\varphi^{n_1}}\in a_1a_3$ and 
$a_3^{\varphi^{n_2}}\in a_2a_4$ for some $m_2,n_2\in\{1,2\}.$ 

\end{proof}

\begin{lemma}\label{lemma3.12}
We get $|a_ia_k \cap a_ja_k|\le 2$ if $i+j \equiv 0 (mod 2)$ and $a_k\in S_3$ for distinct $i,j,k\in \{1,2,3,4\}$.
\end{lemma}

\begin{proof}
If $|a_ia_k\cap a_ja_k|\ge3$, then $12 \le \langle a_i\cdot a_k, a_j\cdot a_k\rangle$. Note that $\langle a_i\cdot a_k, a_j\cdot a_k\rangle \le 16$.

$i)$ $\langle a_i\cdot a_k, a_j\cdot a_k\rangle=12$. \\
Then, $c_{a_ia_j}^{a_k^{\varphi}}=c_{a_ia_j}^{a_k^{\psi}}=1$. So, $a_k^{\varphi},a_k^{\psi}\in a_ia_j$. But, by Lemma ~\ref{lemma3.11}, for $l\in\{1,2,3,4\} \backslash \{i,j,k\}$, $a_k^{\varphi^{i_1}}, a_l^{\varphi^{i_2}}\in a_ia_j$ for some $i_1, i_2\in \{1,2\}$. 
 So, $a_l \in S_3$ and $a_l^{\varphi}=a_k$ or $a_k^{\varphi}=a_l$ for $k+l\equiv0$ $(mod 2)$. If $a_l^{\varphi}=a_k$, then $a_k\cdot a_l=2a_l+t^{\varphi^{m}}+s^{\varphi^{n}}$ for some $m,n\in \{1,2\}$ by Lemma \ref{lemma3.11}. Then, by Lemma \ref{lemma3.11}, $a_i=a_j^{\varphi}$ or $a_j=a_i^{\varphi}$. Contradict to (4) and (5). (Similarly, we can do this when $a_k^{\varphi}=a_l$.)\\
 
$ii)$ $\langle a_i\cdot a_k, a_j\cdot a_k\rangle=16$.\\
 Then, $c_{a_ia_j}^{a_k^{\varphi}}=2$ or $c_{a_ia_j}^{a_k^{\psi}}=2 $ and $c_{a_ia_j}^{a_k^{\varphi}}=1$.\\
If $c_{a_ia_j}^{a_k^{\varphi}}=2$, then, $a_i=a_k^{\varphi}$ and $a_j=a_i^{\varphi}$ or $a_j=a_k^{\varphi}$ and $a_i=a_j^{\varphi}$. It cannot happen.\\
If $c_{a_ia_j}^{a_k^{\psi}}=2$ and $c_{a_ia_j}^{a_k^{\varphi}}=1$, then $a_ia_j= *+a_k^{\varphi}+2a_k^{\psi}$. But $s^{\varphi^{m}}, t^{\varphi^n}, \in a_ia_j$ for some $m,n\in\{1,2\}$ by Lemma~\ref{lemma3.11}..\\ 
Because of $a_i\not\in\langle s\rangle\cup \langle t\rangle$ for $i=1,2,3,4,$ we get a contradiction.\\

So, $|a_ia_k\cap a_ja_k|\le 2 $ if $i+j\equiv 0$ $( mod 2),$ $a_k\in S_3$\\

\end{proof}

\begin{remark}
In fact, for $i,j,k,l$ with $\{i,j,k,l\}=\{1,2,3,4\}$, we can get $a_i^{\varphi^m}, a_j^{\varphi^n} \in a_ka_l$ for some $m,n\in\{1,2\}$ when $t\in S_2$. Furthermore, $|a_ia_k \cap a_ja_k|\le 2$ if $i+j \equiv 0 (mod 2)$ for any distinct $i,j,k\in \{1,2,3,4\}$.  
\end{remark}

We will end this proof by using above lemmas, (4) and (5).\\

 At least one of $\{a_1,a_4\}$, $\{a_2,a_3\}$ is contained in $S_3$ whether $t\in S_2$ or not.\\
  If $a_2,a_3\in S_3$, choose $\gamma\in\Omega$ such that $\mathrm{r}(\beta_3,\gamma)=a_3, \mathrm{r}(\gamma,\beta_4)=a_2.$ Then, $w:=\mathrm{r}(\gamma,\alpha_4)\in a_1a_2\cap a_2a_3$. By Lemma~\ref{lemma3.12}, $|a_1a_2\cap a_2a_3|\le 2$ and $s^{\varphi},t^{\varphi}\in a_1a_2\cap a_2a_3$. So, $w=t^{\varphi} $ or $w=s^{\varphi}$. If $w=s^{\varphi},$ $\mathrm{r}(\gamma,\alpha_1)\in a_3a_3\cap s^{\varphi}s^{\varphi}=\emptyset$. If $w=t^{\varphi}$, then $\mathrm{r}(\gamma,\alpha_2)\in t^{\varphi}s^{\psi}\cap a_2a_3\cap a_3a_4\ne \emptyset$. Since $|a_2a_3\cap a_3a_4|\le 2$ and $t^{\varphi}, s^{\varphi}\in a_2a_3\cap a_3a_4$, it contradicts to $s\wr t$.
  By the same reason, if $a_1,a_4\in S_3$, we can use the same arguments.(Choose $\gamma$ such that $\mathrm{r}(\beta_1,\gamma)=a_1,\mathrm{r}(\beta_2,\gamma)=a_4$)\\

So, $\mathrm{r}(\beta_1,\alpha_2)=a_2 $ cannot hold.

Consequently, we get the relations as follows (See the left figure of Figure\ref{figure2}):

$$\begin{cases}
a_1=r(\alpha_1,\beta_1)=r(\alpha_2,\beta_2)=r(\alpha_3,\beta_3)=r(\alpha_4,\beta_4),\\
 a_2=r(\alpha_1,\beta_2)=r(\alpha_2,\beta_3)=r(\alpha_3,\beta_4)=r(\alpha_4,\beta_1), \\
a_3=r(\alpha_1,\beta_3)=r(\alpha_2,\beta_4)=r(\alpha_3,\beta_1)=r(\alpha_4,\beta_2),\\
a_4=r(\alpha_1,\beta_4)=r(\alpha_2,\beta_1)=r(\alpha_3,\beta_2)=r(\alpha_4,\beta_3).

\end{cases}$$

\begin{figure}[h]

\begin{tikzpicture}
  \matrix (m) [matrix of math nodes,row sep=0.3cm,column sep=0.3cm]
  { &  & \alpha_1 & & & & &  & \alpha_1 & &  \\
 	 & \beta_4 & & \beta_1& & &  & \delta_4 & & \delta_1&  \\
 	\alpha_4 & & \alpha &   & \alpha_2 & & \alpha_4 & & \alpha &   & \alpha_2 \\
	 & \beta_3 &   & \beta_2  & & &  & \delta_3 &   & \delta_2  & \\
	  & & \alpha_3 &   & & &  & & \alpha_3 &   &\\};
  \path[dashed,-]
    (m-3-3) edge (m-1-3)
    (m-3-3) edge (m-5-3)
    (m-3-3) edge (m-3-1)
    (m-3-3) edge (m-3-5)
    ;

  \path[-,color=black]
    (m-3-3) edge (m-2-2)
    (m-3-3) edge (m-2-4)
    (m-3-3) edge (m-4-2)
    (m-3-3) edge (m-4-4)
    ;

 \path[-,color=red]
   (m-1-3) edge (m-2-4)
   (m-3-5) edge (m-4-4)
   (m-5-3) edge (m-4-2)
   (m-3-1) edge (m-2-2);

 \path[-,color=blue]
   (m-1-3) edge (m-4-4)
   (m-3-5) edge (m-4-2)
   (m-5-3) edge (m-2-2)
   (m-3-1) edge (m-2-4);
   
    \path[-,color=green]
   (m-1-3) edge (m-4-2)
   (m-3-5) edge (m-2-2)
   (m-5-3) edge (m-2-4)
   (m-3-1) edge (m-4-4);
   
    \path[-,color=brown]
   (m-1-3) edge (m-2-2)
   (m-3-5) edge (m-2-4)
   (m-5-3) edge (m-4-4)
   (m-3-1) edge (m-4-2);

     \path[dashed,-]
    (m-3-9) edge (m-1-9)
    (m-3-9) edge (m-5-9)
    (m-3-9) edge (m-3-7)
    (m-3-9) edge (m-3-11)
    ;

  \path[-]
    (m-3-9) edge (m-2-8)
    (m-3-9) edge (m-2-10)
    (m-3-9) edge (m-4-8)
    (m-3-9) edge (m-4-10)
    ;

 \path[-,dashed,color=red]
   (m-1-9) edge (m-2-10)
   (m-3-11) edge (m-4-10)
   (m-5-9) edge (m-4-8)
   (m-3-7) edge (m-2-8);

 \path[-,dashed,color=blue]
   (m-1-9) edge (m-4-10)
   (m-3-11) edge (m-4-8)
   (m-5-9) edge (m-2-8)
   (m-3-7) edge (m-2-10);
   
    \path[-,dashed,color=green]
   (m-1-9) edge (m-4-8)
   (m-3-11) edge (m-2-8)
   (m-5-9) edge (m-2-10)
   (m-3-7) edge (m-4-10);
   
    \path[-,dashed,color=brown]
   (m-1-9) edge (m-2-8)
   (m-3-11) edge (m-2-10)
   (m-5-9) edge (m-4-10)
   (m-3-7) edge (m-4-8);

 \path[-,dashed,color=red]
   (m-1-9) edge (m-2-10)
   (m-3-11) edge (m-4-10)
   (m-5-9) edge (m-4-8)
   (m-3-7) edge (m-2-8);

 \path[-,dashed,color=blue]
   (m-1-9) edge (m-4-10)
   (m-3-11) edge (m-4-8)
   (m-5-9) edge (m-2-8)
   (m-3-7) edge (m-2-10);
   
    \path[-,dashed,color=green]
   (m-1-9) edge (m-4-8)
   (m-3-11) edge (m-2-8)
   (m-5-9) edge (m-2-10)
   (m-3-7) edge (m-4-10);
   
    \path[-,dashed,color=brown]
   (m-1-9) edge (m-2-8)
   (m-3-11) edge (m-2-10)
   (m-5-9) edge (m-4-10)
   (m-3-7) edge (m-4-8);
   
\end{tikzpicture} 

\caption{}\label{figure2}
\end{figure}

\textbf{STEP 2)}
Choose $s_1\in\langle s \rangle$.\\
Let $\delta_1:=\mathcal{P}(\alpha,\alpha_1,\alpha_2,\alpha_3,\alpha_4)_s(i,j)$,
 $\delta_2:=\mathcal{P}(\alpha,\alpha_1,\alpha_2,\alpha_3,\alpha_4)_s(j,-i)$,\\
  $\delta_3:=\mathcal{P}(\alpha,\alpha_1,\alpha_2,\alpha_3,\alpha_4)_s$ $(-i,-j)$,
   $\delta_4:=\mathcal{P}(\alpha,\alpha_1,\alpha_2,\alpha_3,\alpha_4)_s(-j,i)$ $\in \alpha s_1$
    for some integers $i,j$.
\\

We get the relations on $\alpha s \times \alpha s_1$ in the right figure in Figure \ref{figure2} by the properties of an $s$-plane where $s \cdot s_1 = r_1+r_2+r_3+r_4$ where $r_i\in S$ for $i=1,2,3,4$ and the relations $r_1,r_2,r_3$ and $r_4$ are represented by the red, blue, green and brown dashed lines as follows: 

\[
\begin{cases}
r_1:=r(\alpha _1,\delta_1)=r(\alpha _2,\delta_2)=r(\alpha _3,\delta_3)=r(\alpha _4,\delta_4),\\
r_2:=r(\alpha _1,\delta_2)=r(\alpha _2,\delta_3)=r(\alpha _3,\delta_4)=r(\alpha _4,\delta_1),\\
r_3:=r(\alpha _1,\delta_3)=r(\alpha _2,\delta_4)=r(\alpha _3,\delta_1)=r(\alpha _4,\delta_2),\\
r_4:=r(\alpha _1,\delta_4)=r(\alpha _2,\delta_1)=r(\alpha _3,\delta_2)=r(\alpha _4,\delta_3).
\end{cases}
\]

Then,

$X_1:=\{\mathrm{r}(\beta_1,\delta_1), \mathrm{r}(\beta_2,\delta_2), \mathrm{r}(\beta_3,\delta_3),\mathrm{r}(\beta_4,\delta_4)\}$ $\subset a_1r_1 \cap a_2r_2 \cap a_3r_3 \cap a_4r_4 $,

$X_2:=\{\mathrm{r}(\beta_1,\delta_2), \mathrm{r}(\beta_2,\delta_3), \mathrm{r}(\beta_3,\delta_4),\mathrm{r}(\beta_4,\delta_1)\}$ $\subset a_1r_2 \cap a_2r_3 \cap a_3r_4 \cap a_4r_1 $,

$X_3:=\{\mathrm{r}(\beta_1,\delta_3), \mathrm{r}(\beta_2,\delta_4), \mathrm{r}(\beta_3,\delta_1),\mathrm{r}(\beta_4,\delta_2)\}$  $\subset a_1r_3 \cap a_2r_4 \cap a_3r_1 \cap a_4r_2$,

$X_4:=\{\mathrm{r}(\beta_1,\delta_4), \mathrm{r}(\beta_2,\delta_1), \mathrm{r}(\beta_3,\delta_2),\mathrm{r}(\beta_4,\delta_3)\}$ $\subset a_1r_4 \cap a_2r_1 \cap a_3r_2 \cap a_4r_3 $.\\

$Claim)$ For each $i$, $|X_i|=1$ and $X_i$'s are mutually disjoint.

Suppose that there exist $X_i, X_j$ ($i\not = j$) such that $X_i \cap X_j \not = \emptyset$.

Choose $a \in X_i \cap X_j$, then $a \in a_k r_{k_1} \cap a_k r_{k_2}$ with $r_{k_1}\not = r_{k_2}$ for some $k,$ $ k_1$, $k_2$ .(Note that $r_1\not = r_3$ and $r_2\not =r_4$). \\
So, we get $0 < \langle a_k \cdot r_{k_1} ,a_k \cdot r_{k_2} \rangle$. Then, $a_k\in r_{k_1}r_{k_2}\subset \langle s \rangle$ but $a_k \not \in \langle s \rangle$ since $s \cdot t = a_1 + a_2 + a_3 +a_4$. So, $X_i \cap X_j = \emptyset.$

Since $s_i\in \langle s\rangle$ and $|s_1t|=4$, the claim holds.\\   
Cleary, when $s_1\in \langle t \rangle$, we can get the same result on $\alpha s \times\alpha s_1$.


\begin{remark}\label{remark3.9}
For $uv\in S$ with $|uv|=4$, it we have the structure of $\alpha u \times \alpha v$ like the first figure in Figure \ref{figure2}, then it is unique up to rotations on the ordered elements of $\alpha u$ and $\alpha v$, respectively.
\end{remark}

\textbf{STPE 3)} For $s_1\in \langle s\rangle$, $t_1\in \langle t\rangle$, we have to show that the relations on  \\
$\{\alpha_{i,j},\alpha_{-j_1,i_1},\alpha_{-i_1,-j_1,}\alpha_{j_1,-i_1}\}$ $\times$ $\{\beta_{i_2,j_2},\beta_{-j_2,i_2},\beta_{-i_2,-j_2},\beta_{j_2,-i_2}\}$ are invariant under the map $\sigma$ where $\alpha_{i_1,j_1}:=\mathcal{P}(\alpha, \alpha_1, \alpha_2, \alpha_3, \alpha_4)_s(i_1,j_1) \in \alpha s_1$, $\beta_{i_2,j_2}:=\mathcal{P}(\alpha, \beta_1,\beta_2,$ $\beta_3, \beta_4)_t$$(i_2,j_2) \in \alpha t_1$ for all $i_1,i_2,j_1,j_2\in \mathbb{Z}$.

If $t\in S_2$, then it's obvious. Otherwise, for $t_1\in \langle t\rangle$, when $t_1\in \langle s \rangle $, we get the desired one by Remark \ref{remark3.9}. When $t_1 \not \in \langle s \rangle $, we get the conclusion by STEP1), STEP2).  

\end{proof}


\begin{theorem}\label{theorem3.16}
Let $s\in S_3$, $t,r\in S^{\#}$ with  $s\wr t$, $s\wr r$ and $t\wr r$. Then, $t \sim_\alpha r$.

\end{theorem}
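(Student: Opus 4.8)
The plan is to deduce $t\sim_\alpha r$ from the two instances $s\sim_\alpha t$ and $s\sim_\alpha r$ supplied by Theorem~\ref{theorem3.10}, using the $S_3$-element $s$ as a common coordinate grid. First I would dispose of the easy cases: if $t\in S_3$ then, since $t\wr r$, Theorem~\ref{theorem3.10} applied with $t$ in the role of the $S_3$-relation gives $t\sim_\alpha r$ at once; as $\sim_\alpha$ is symmetric in its two arguments (because $\mathrm{r}(y,x)=\mathrm{r}(x,y)^{*}$ and $*$ respects $\sigma$-invariance), the case $r\in S_3$ is identical. Hence the whole content of the theorem is the remaining case $t,r\in S_2$, and this is precisely where the intermediary $s$ must be used.

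So assume $t,r\in S_2$. Then $\gn{t}=\{1_\Omega,t\}$ and $\gn{r}=\{1_\Omega,r\}$, so $\alpha\gn{t}=\{\alpha,\beta_1,\beta_2,\beta_3,\beta_4\}$ and $\alpha\gn{r}=\{\alpha,\gamma_1,\gamma_2,\gamma_3,\gamma_4\}$, where the two quadruples are the $\sigma$-orbits of the inner points of a $t$-plane and an $r$-plane rooted at $\alpha$. Since $\mathrm{r}(\alpha,\beta_i)=t$ and $\mathrm{r}(\alpha,\gamma_j)=r$ are already constant, the only relations on $\alpha\gn{t}\times\alpha\gn{r}$ that can break $\sigma$-invariance are the entries of the $4\times4$ block $\mathrm{r}(\beta_i,\gamma_j)$. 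Thus the theorem reduces to the single assertion that this block is circulant, i.e. $\mathrm{r}(\beta_i,\gamma_j)=\mathrm{r}(\beta_{i+1},\gamma_{j+1})$ with indices read modulo $4$. Because $t\wr r$, Lemma~\ref{lemma3.7} gives $t\cdot r=b_1+b_2+b_3+b_4$ with the $b_i$ distinct and outside $\gn{t}\cup\gn{r}$, and the path $\beta_i\to\alpha\to\gamma_j$ shows every block entry lies in $\{b_1,b_2,b_3,b_4\}$.

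The core of the argument is to transport circulant structure through $s$. Fix, by Theorem~\ref{theorem3.10}, aligned $s$-, $t$- and $r$-planes at $\alpha$ realizing $s\sim_\alpha t$ and $s\sim_\alpha r$; then the relations on $\alpha\gn{s}\times\alpha\gn{t}$ and on $\alpha\gn{s}\times\alpha\gn{r}$ are $\sigma$-invariant, so both bridge-blocks $\mathrm{r}(\alpha_{p,q},\beta_i)$ and $\mathrm{r}(\alpha_{p,q},\gamma_j)$ are circulant over the entire $s$-grid. For fixed $i,j$ I would then examine, over all grid points $\alpha_{p,q}$, the pair $\bigl(\mathrm{r}(\beta_i,\alpha_{p,q}),\mathrm{r}(\alpha_{p,q},\gamma_j)\bigr)$: each such $\alpha_{p,q}$ is a common neighbour of $\beta_i$ and $\gamma_j$ and hence forces $\mathrm{r}(\beta_i,\gamma_j)\in \mathrm{r}(\beta_i,\alpha_{p,q})\cdot\mathrm{r}(\alpha_{p,q},\gamma_j)$. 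Because both bridge-blocks are $\sigma$-invariant, replacing $(i,j)$ by $(i+1,j+1)$ and $(p,q)$ by $\sigma(p,q)$ leaves each pair unchanged; since $\sigma$ permutes the grid, the multiset of constraint pairs is the same for $(i,j)$ and for $(i+1,j+1)$, and therefore so is the set of $b\in\{b_1,\dots,b_4\}$ compatible with \emph{all} bridge points. It then remains to check that this compatible set is a singleton, which is exactly where $s\in S_3$ enters: the two-dimensional $s$-grid supplies enough bridge points, and enough distinctness among the relations occurring in $s\cdot t$, $s\cdot r$ and the $b_i$, to pin $\mathrm{r}(\beta_i,\gamma_j)$ uniquely; the relevant non-collision and intersection-number bounds are Lemmas~\ref{lemma3.8}, \ref{lemma3.11}, \ref{lemma3.12}, and the rigidity is the uniqueness-up-to-rotation of such a $4\times4$ block in Remark~\ref{remark3.9}. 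Once the compatible relation is unique it is automatically shift-invariant, the block $\mathrm{r}(\beta_i,\gamma_j)$ is circulant, and $t\sim_\alpha r$ follows.

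I expect the last point — showing that the constraints coming from the $s$-grid determine $\mathrm{r}(\beta_i,\gamma_j)$ uniquely, rather than merely restricting it to a $\sigma$-stable set — to be the main obstacle. Circulant-ness of the two bridge-blocks does not formally imply circulant-ness of the $t$--$r$ block; one genuinely needs the rigidity of an $S_3$-plane together with the distinctness forced by Lemmas~\ref{lemma3.8}--\ref{lemma3.12} to exclude a ``sheared'' non-circulant pattern, and organizing this exclusion, most likely by a relation-chase on a figure analogous to Figure~\ref{figure2}, is the delicate step.
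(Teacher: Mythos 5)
Your setup is the right one and is essentially the paper's: fix the planes witnessing $s\sim_\alpha t$ and $s\sim_\alpha r$, reduce the question to whether the $4\times 4$ block $\mathrm{r}(\beta_i,\gamma_j)$ is circulant, and constrain each entry through common neighbours on the $s$-grid via $\mathrm{r}(\beta_i,\gamma_j)\in a_kb_l$ for the appropriate $k,l$. But the argument stops exactly where the work begins: you reduce everything to the assertion that the set of relations compatible with all bridge constraints is a singleton, and then declare that establishing this is ``the main obstacle'' and ``the delicate step.'' That singleton assertion is the entire content of the theorem, and it is not obtainable by a generic appeal to Lemmas~\ref{lemma3.8}--\ref{lemma3.12} and Remark~\ref{remark3.9}. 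The paper proves it by a specific mechanism your proposal does not contain: group the sixteen block entries into the four diagonal classes $X_1,\dots,X_4$ with $X_k\subset a_1b_k\cap a_2b_{k+1}\cap a_3b_{k+2}\cap a_4b_{k+3}\cap tr$ (indices mod $4$), assume the block is not circulant, and prove the combinatorial Claim that any non-circulant filling of the block by the four relations of $t\cdot r=c_1+c_2+c_3+c_4$ forces two distinct $c_i,c_j$ into $X_k\cap X_l$ for two distinct classes $k\neq l$. This gives $\langle c_i\cdot a_m,\,c_j\cdot a_m\rangle\ge 8$ for every $m$, and a count of the coefficients of $c_i\cdot c_j$ then yields $\sum_{w\in S^{\#}}c_{c_ic_j}^{w}>4$, contradicting the valency identity $\sum_{w\in S^{\#}}c_{c_ic_j}^{w}=4$. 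Without this Claim and the valency count (or a substitute for them), your ``compatible set is $\sigma$-stable'' observation only shows that the obstruction, if any, is itself $\sigma$-symmetric; it does not exclude a sheared block.

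Two secondary issues. First, disposing of the case $t\in S_3$ by reapplying Theorem~\ref{theorem3.10} only produces \emph{some} pair of $t$- and $r$-planes witnessing $t\sim_\alpha r$; the way Theorem~\ref{theorem3.16} is used afterwards (to glue the rotations into a single automorphism $\sigma_\alpha$) needs the $\sigma$-invariance for the planes already fixed by $s\sim_\alpha t$ and $s\sim_\alpha r$, which the paper's uniform argument delivers and your reduction does not. Second, in your main case $t,r\in S_2$ there is no $t^{\varphi}$ or $r^{\varphi}$ structure to exploit, so all distinctness must be extracted from $s\cdot t=a_1+\dots+a_4$ and $s\cdot r=b_1+\dots+b_4$ together with $a_i,b_i\notin\langle s\rangle\cup\langle t\rangle\cup\langle r\rangle$ (Lemma~\ref{lemma3.7}); this is another reason the contradiction has to be routed through the $a_m$'s and $b_l$'s as above rather than through an unspecified ``rigidity'' of the $t$--$r$ block.
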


\begin{proof}

Note that we have $s \sim_\alpha t$ and $s \sim_\alpha r$.

Let $\alpha_i\in \alpha s$, $\beta_i\in \alpha t$ and $\delta_i\in \alpha r$ for $i=1,2,3,4$ with 
$\mathrm{r}(\alpha_1,\alpha_3)=\mathrm{r}(\alpha_2,\alpha_4)=s^{\psi}$, 
$\mathrm{r}(\beta_1,\beta_3)=\mathrm{r}(\beta_2,\beta_4)=t^{\psi}$ and 
$\mathrm{r}(\delta_1,\delta_3)=\mathrm{r}(\delta_2,\delta_4)=r^{\psi}$.

Let the relations on $\alpha s \times \alpha t$, $\alpha s \times \alpha r$ be given by Figure \ref{figure3} where $s
\cdot t=a_1+a_2+a_3+a_4$,  $s\cdot r=b_1+b_2+b_3+b_4$ for some distinct $a_i$ $\not \in \langle s\rangle$  $\cup \langle t\rangle$, $b_i$ $\not \in \langle s\rangle$  $\cup \langle r\rangle$ for $i=1,2,3,4$ and the relations $a_1,a_2,a_3$ and $a_4$(resp, $b_1,b_2,b_3$ and $b_4 $) are represented by the red, blue, green and brown (resp, dashed) lines in th first(resp, second) figure in Figure \ref{figure3}.

\begin{figure}[h]

\begin{tikzpicture}
  \matrix (m) [matrix of math nodes,row sep=0.3cm,column sep=0.3cm]
  { &  & \alpha_1 & & & & &  & \alpha_1 & &  && &  & \beta_1 & & \\
 	 & \beta_4 & & \beta_1& & &  & \delta_4 & & \delta_1&  	&& & \delta_4 & & \delta_1&   \\
 	\alpha_4 & & \beta &   & \alpha_2 & & \alpha_4 & & \beta &   & \alpha_2&& & & \beta &   &  \\
	 & \beta_3 &   & \beta_2  & & &  & \delta_3 &   & \delta_2  & 	 &&& \delta_3 &   & \delta_2  &  \\
	  & & \alpha_3 &   & & &  & & \alpha_3 &   &  && & & \beta_3 &   & \\};

  \path[dashed,-]
    (m-3-3) edge (m-1-3)
    (m-3-3) edge (m-5-3)
    (m-3-3) edge (m-3-1)
    (m-3-3) edge (m-3-5)
    ;

  \path[-]
    (m-3-3) edge (m-2-2)
    (m-3-3) edge (m-2-4)
    (m-3-3) edge (m-4-2)
    (m-3-3) edge (m-4-4)
    ;

 \path[-,color=red]
   (m-1-3) edge (m-2-4)
   (m-3-5) edge (m-4-4)
   (m-5-3) edge (m-4-2)
   (m-3-1) edge (m-2-2);

 \path[-,color=blue]
   (m-1-3) edge (m-4-4)
   (m-3-5) edge (m-4-2)
   (m-5-3) edge (m-2-2)
   (m-3-1) edge (m-2-4);
   
    \path[-,color=green]
   (m-1-3) edge (m-4-2)
   (m-3-5) edge (m-2-2)
   (m-5-3) edge (m-2-4)
   (m-3-1) edge (m-4-4);
   
    \path[-,color=brown]
   (m-1-3) edge (m-2-2)
   (m-3-5) edge (m-2-4)
   (m-5-3) edge (m-4-4)
   (m-3-1) edge (m-4-2);

     \path[dashed,-]
    (m-3-9) edge (m-1-9)
    (m-3-9) edge (m-5-9)
    (m-3-9) edge (m-3-7)
    (m-3-9) edge (m-3-11)
    ;

  \path[-,dotted]
    (m-3-9) edge (m-2-8)
    (m-3-9) edge (m-2-10)
    (m-3-9) edge (m-4-8)
    (m-3-9) edge (m-4-10)
    ;

  \path[-,dotted]
    (m-3-9) edge (m-2-8)
    (m-3-9) edge (m-2-10)
    (m-3-9) edge (m-4-8)
    (m-3-9) edge (m-4-10)
    ;

 \path[-,dashed,color=red]
   (m-1-9) edge (m-2-10)
   (m-3-11) edge (m-4-10)
   (m-5-9) edge (m-4-8)
   (m-3-7) edge (m-2-8);

 \path[-,dashed,color=blue]
   (m-1-9) edge (m-4-10)
   (m-3-11) edge (m-4-8)
   (m-5-9) edge (m-2-8)
   (m-3-7) edge (m-2-10);
   
    \path[-,dashed,color=green]
   (m-1-9) edge (m-4-8)
   (m-3-11) edge (m-2-8)
   (m-5-9) edge (m-2-10)
   (m-3-7) edge (m-4-10);
   
    \path[-,dashed,color=brown]
   (m-1-9) edge (m-2-8)
   (m-3-11) edge (m-2-10)
   (m-5-9) edge (m-4-10)
   (m-3-7) edge (m-4-8);
    \path[-,dashed,color=red]
   (m-1-9) edge (m-2-10)
   (m-3-11) edge (m-4-10)
   (m-5-9) edge (m-4-8)
   (m-3-7) edge (m-2-8);

 \path[-,dashed,color=blue]
   (m-1-9) edge (m-4-10)
   (m-3-11) edge (m-4-8)
   (m-5-9) edge (m-2-8)
   (m-3-7) edge (m-2-10);
   
    \path[-,dashed,color=green]
   (m-1-9) edge (m-4-8)
   (m-3-11) edge (m-2-8)
   (m-5-9) edge (m-2-10)
   (m-3-7) edge (m-4-10);
   
    \path[-,dashed,color=brown]
   (m-1-9) edge (m-2-8)
   (m-3-11) edge (m-2-10)
   (m-5-9) edge (m-4-10)
   (m-3-7) edge (m-4-8);

   \path[-]
    (m-3-15) edge (m-1-15)
    (m-3-15) edge (m-5-15)
    ;

  \path[dotted,-]
    (m-3-15) edge (m-2-14)
    (m-3-15) edge (m-2-16)
    (m-3-15) edge (m-4-14)
    (m-3-15) edge (m-4-16)
    ;

  \path[dotted,-]
    (m-3-15) edge (m-2-14)
    (m-3-15) edge (m-2-16)
    (m-3-15) edge (m-4-14)
    (m-3-15) edge (m-4-16)
    ;

 \path[-,dotted,color=red]
   (m-1-15) edge (m-2-16)
   (m-5-15) edge (m-4-14)
   ;

 \path[-,dotted,color=blue]
   (m-1-15) edge (m-4-16)
   (m-5-15) edge (m-2-14)
   ;
   
    \path[-,dotted,color=green]
   (m-1-15) edge (m-4-14)
   (m-5-15) edge (m-2-16)
   ;
   
    \path[-,dotted,color=brown]
   (m-1-15) edge (m-2-14)
   (m-5-15) edge (m-4-16)
  ;

 \path[-,dotted,color=red]
   (m-1-15) edge (m-2-16)
   (m-5-15) edge (m-4-14)
   ;

 \path[-,dotted,color=blue]
   (m-1-15) edge (m-4-16)
   (m-5-15) edge (m-2-14)
   ;
   
    \path[-,dotted,color=green]
   (m-1-15) edge (m-4-14)
   (m-5-15) edge (m-2-16)
   ;
   
    \path[-,dotted,color=brown]
   (m-1-15) edge (m-2-14)
   (m-5-15) edge (m-4-16)
  ;

\end{tikzpicture} 

\caption{}\label{figure3}
\end{figure}

$X_1:=\{\mathrm{r}(\beta_1,\delta_1), \mathrm{r}(\beta_2,\delta_2), \mathrm{r}(\beta_3,\delta_3),\mathrm{r}(\beta_4,\delta_4)\}$ $\subset a_1b_1 \cap a_2b_2 \cap a_3b_3 \cap a_4b_4 \cap tr$,

$X_2:=\{\mathrm{r}(\beta_1,\delta_2), \mathrm{r}(\beta_2,\delta_3), \mathrm{r}(\beta_3,\delta_4),\mathrm{r}(\beta_4,\delta_1)\}$ $\subset a_1b_2 \cap a_2b_3 \cap a_3b_4 \cap a_4b_1 \cap tr$,

$X_3:=\{\mathrm{r}(\beta_1,\delta_3), \mathrm{r}(\beta_2,\delta_4), \mathrm{r}(\beta_3,\delta_1),\mathrm{r}(\beta_4,\delta_2)\}$  $\subset a_1b_3 \cap a_2b_4 \cap a_3b_1 \cap a_4b_2\cap tr$,

$X_4:=\{\mathrm{r}(\beta_1,\delta_4), \mathrm{r}(\beta_2,\delta_1), \mathrm{r}(\beta_3,\delta_2),\mathrm{r}(\beta_4,\delta_3)\}$ $\subset a_1b_4 \cap a_2b_1 \cap a_3b_2 \cap a_4b_3 \cap tr$.

Suppose that for $t$ and $r$, we can't have the structure in the conclusion of STEP1) in Theorem \ref{theorem3.10}.$\cdots (*)$\\
$Claim)$ There always exist
 $i,j,k,l$ such that $c_i, c_j \in X_k \cap X_l$, $i\not = j, k \not = l$ where $t\cdot r=c_1+c_2+c_3+c_4$ for some distinct $c_i$'s of $S$.

In the case  of the form in the third figure in Figure \ref{figure3}  where the relations $c_1,c_2,c_3$ and $c_4$ on $\alpha t\times \alpha r$ are represented by the red, blue, green and brown dotted lines, then $\mathrm{r}(\beta_2,\delta_4)=c_1,$ $\mathrm{r}(\beta_2,\delta_2)=c_3$ $\in X_1 \cap X_3$ or  $\mathrm{r}(\beta_2,\delta_1)=c_2,$ $\mathrm{r}(\beta_2,\delta_3)=c_4$ $\in X_2 \cap X_4$ by $(*)$.

Otherwise, let $x=\mathrm{r}(\beta_1,\delta_i), y=\mathrm{r}(\beta_1,\delta_j), z=\mathrm{r}(\beta_1,\delta_k)$ and $w=\mathrm{r}(\beta_1,\delta_l)$ be contained in $X_{i_x}$, $X_{i_y}$, $X_{i_z}$ and $X_{i_w}$, respectively for $\{i_x,i_y,i_z,i_w\}=\{1,2,3,4\}$ and $\mathrm{r}(\delta_i,\delta_k)=\mathrm{r}(\delta_l,\delta_j)=t^{\psi}$ for $i+k \equiv 0, j+l \equiv 0$ $(mod 2)$ where $\{x, y, z, w\} = tr$ and we may assume that $\mathrm{r}(\beta_3,\delta_j)=x \in X_{i_w}. $ Suppose the claim doesn't hold. Then, $w\not \in X_{i_x}$. Note $\mathrm{r}(\beta_3,\delta_k) \not =z$, so $\mathrm{r}(\beta_3,\delta_k)=y \in X_{i_x}$. $\mathrm{r}(\beta_3,\delta_l)$ cannot be $x,y,w$. So, $\mathrm{r}(\beta_3, \delta_l)=z\in X_{i_y}$. Then,  $\mathrm{r}(\beta_3,\delta_i)=w \in X_{i_z}$. But there is $\beta_2\in \beta t$ such that  $\mathrm{r}(\beta_2,\delta_j)\in X_{i_x}$. Since $w\not \in X_{i_x}$,  $\mathrm{r}(\beta_1,\delta_j)=y$ and  $\mathrm{r}(\beta_3,\delta_j)=x$,  $\mathrm{r}(\beta_2,\delta_j)=z$ and it is contained in $X_{i_x}$. But $y,z \in X_{i_x}, X_{i_y}$. 
Thus, our claim must hold.

So there exists $c_i, c_j, b_{k_{1}}, b_{k_{2}}$ such that $c_i, c_j \in a_kb_{k_{1}} \cap a_kb_{k_{2}} $ for $k=1,2,3,4$ by the claim. Then, $\langle c_i\cdot a_k, c_j \cdot a_k \rangle \ge 8$ for $k=1,2,3,4.$ If there is $k$ such that $c_{c_i c_j}^{a_k^{\psi}}=2$, then $c_i \cdot c_j =t^{\varphi^m} +2 a_k^{\psi} + d$ for some $d \in S, m,n \in \{1,2\}$ because $a_l \not \in \langle t \rangle $ for $l=1,2,3,4.$
 But for all $a_l \in st - \{ a_k \}$, $c_{c_ic_j}^{a_l^{\psi}} \not= 2$. So, $c_{c_ic_j}^{a_l^{\varphi}} \ge1$ for all $a_l \in st -\{ a_k \}$. But $a_l$'s are distinct, $\sum_{w \in S^{\#}}c_{c_ic_j}^w > 4$. So, for all $k$, $c_{c_ic_j}^{a_k^{\varphi}}>0$ and $t^{\varphi^m} \in c_ic_j$ for some $m,n\in \{ 1,2\}$. Then, $\sum_{w \in S^{\#}}c_{c_ic_j}^w > 4$ since $a_i$'s are distinct.
 
 From STEP2) and STEP2) in the proof of Theorem~\ref{theorem3.10}, we get the conclusion.
 
\end{proof}

Let $(\Omega,S)$ be a 4-equvalenced association scheme with $S_3\not =\emptyset$.
Choose $s_1\in S_3$ such that $|\langle s_1\rangle|$ is maximal for $s_1\in S_3$ and choose $s_i\in S^{\#}$ such that $|\langle s_i\rangle|$ is maximal among $s_i\not \in \cup _{j=1}^{i-1}\langle s_j \rangle$ for $i=2,3, \cdots$. First, fix an $s_1$-plane with having $\alpha$ as a (0,0)-th image. Then, for each $i=2, 3,\cdots$, we get an $s_i$-plane with having $\alpha$ as a (0,0)-th image which makes $s_1\sim _\alpha s_i$ by Theorem ~\ref{theorem3.10}. Since the choice of $s_i$'s, $s_i\wr s_j$ if $i\not = j$,  we get the following theorem by  Theorem \ref{theorem3.16}.

It gives the following theorem.

\begin{theorem}
Let $(\Omega,S)$ be a 4-equivalenced association scheme with $S_3\not =\emptyset$. Then, for any $\alpha \in \Omega$, there is an automorphism $\sigma_\alpha$ of $(\Omega,S)$ with order 4 such that the set of orbitals of $\langle \sigma_\alpha \rangle$ is $\{\alpha s$ $|$ $s\in S\}$.

\end{theorem}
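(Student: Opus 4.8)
The plan is to assemble the local ``$90^{\circ}$ rotations'' carried by the individual planes into a single global permutation of $\Omega$ and then to verify that the compatibility encoded in the relation $\sim_\alpha$ makes this permutation an automorphism of order $4$. First I would set up the generators exactly as in the paragraph preceding the statement: choose $s_1\in S_3$ with $|\langle s_1\rangle|$ maximal, and inductively pick $s_2,s_3,\dots,s_n$ with $s_i\notin\bigcup_{j<i}\langle s_j\rangle$ of maximal $|\langle s_i\rangle|$, stopping once $\bigcup_i\langle s_i\rangle=S$. The size-maximality choice yields $s_i\wr s_j$ for all $i\neq j$: if $s_j\in\langle s_i\rangle$ with $j<i$ then $\langle s_j\rangle\subseteq\langle s_i\rangle$, forcing $|\langle s_j\rangle|=|\langle s_i\rangle|$ and hence $s_i\in\langle s_j\rangle$, contrary to the construction. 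Fixing one $s_1$-plane at $\alpha$ and then, for each $i\geq2$, selecting the $s_i$-plane at $\alpha$ furnished by Theorem~\ref{theorem3.10} so that $s_1\sim_\alpha s_i$, Theorem~\ref{theorem3.16} (with the pivot $s_1\in S_3$) upgrades this to $s_i\sim_\alpha s_j$ for every pair. Thus, for this \emph{single} fixed family of planes, all relations on $\alpha\langle s_i\rangle\times\alpha\langle s_j\rangle$ are invariant under $\sigma$ for all $i,j$.

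Next I would define $\sigma_\alpha$. Every $\beta\neq\alpha$ has $\mathrm{r}(\alpha,\beta)=s$ for some $s\in S^{\#}$, and since $\bigcup_i\langle s_i\rangle=S$ the point $\beta$ appears on some $s_i$-plane; writing $\beta=\widetilde{\mathcal{P}}(\alpha,\dots)_{s_i}(a,b)$ I set $\sigma_\alpha(\beta):=\widetilde{\mathcal{P}}(\alpha,\dots)_{s_i}(-b,a)$, together with $\sigma_\alpha(\alpha)=\alpha$. The $s$-plane relation theorem guarantees that the four images at $(a,b),(-b,a),(-a,-b),(b,-a)$ all lie in $\alpha s$; as $|\alpha s|=4$ and these four are distinct, they exhaust $\alpha s$, so $\sigma_\alpha$ restricts to a $4$-cycle on each sphere. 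The first thing to verify is well-definedness: if $\beta$ also lies on $\alpha\langle s_j\rangle$, the two candidate values must agree. This is exactly where $s_i\sim_\alpha s_j$ is used, for the two parametrizations of the sphere $\alpha s$ differ by a power of $\sigma$, and $\sigma$-invariance of the relations on $\alpha\langle s_i\rangle\times\alpha\langle s_j\rangle$ forces the two induced cyclic orderings of the four points of $\alpha s$ to coincide.

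Finally I would check that $\sigma_\alpha$ preserves all relations. For $\beta\in\alpha\langle s_i\rangle$ and $\gamma\in\alpha\langle s_j\rangle$, write $\beta=\widetilde{\mathcal{P}}_{s_i}(a,b)$ and $\gamma=\widetilde{\mathcal{P}}_{s_j}(c,d)$; the statement $s_i\sim_\alpha s_j$ says precisely that $\mathrm{r}\!\left(\widetilde{\mathcal{P}}_{s_i}(a,b),\widetilde{\mathcal{P}}_{s_j}(c,d)\right)=\mathrm{r}\!\left(\widetilde{\mathcal{P}}_{s_i}(-b,a),\widetilde{\mathcal{P}}_{s_j}(-d,c)\right)$, i.e.\ $\mathrm{r}(\beta,\gamma)=\mathrm{r}(\sigma_\alpha\beta,\sigma_\alpha\gamma)$, and ranging over all $i,j$ this exhausts $\Omega\times\Omega$. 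Since $\sigma_\alpha$ fixes $\alpha$ and cycles each sphere, its order is $4$: the $4$-cycle on $\alpha s_1$ is genuine because $s_1\in S_3$ gives $s_1^{\psi}\neq 1_\Omega$, so the four defining points $\alpha_{1,0},\alpha_{0,1},\alpha_{-1,0},\alpha_{0,-1}$ are distinct. Consequently the orbits of $\langle\sigma_\alpha\rangle$ on $\Omega$ are $\{\alpha\}=\alpha 1_\Omega$ together with the spheres $\alpha s$ for $s\in S^{\#}$, that is, exactly $\{\alpha s\mid s\in S\}$.

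I expect the genuine obstacle to be the well-definedness/gluing step: showing that the rotations defined plane-by-plane agree on the overlaps $\alpha\langle s_i\rangle\cap\alpha\langle s_j\rangle$, so that they patch into a single bijection, and that on each sphere the four $\sigma$-images are honestly distinct (so that $\sigma_\alpha$ is an order-$4$ permutation rather than something with shorter cycles). The relation-preserving property itself then follows almost formally from the $\sigma$-invariance built into $\sim_\alpha$, but the \emph{consistency} of the local choices is delivered only indirectly, through the chain $s_1\sim_\alpha s_i$ and Theorem~\ref{theorem3.16}, and it is in tracking this consistency across all overlaps that the real care is required.
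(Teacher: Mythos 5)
Your proposal follows the paper's argument exactly: the same greedy choice of generators $s_1,s_2,\dots$ (with the same maximality argument giving $s_i\wr s_j$), the same fixing of one $s_1$-plane and use of Theorem~\ref{theorem3.10} to obtain $s_1\sim_\alpha s_i$, and the same appeal to Theorem~\ref{theorem3.16} to upgrade this to pairwise compatibility, after which $\sigma_\alpha$ is the glued rotation. The paper gives precisely this construction in the paragraph preceding the theorem and leaves the remaining verifications (agreement on overlaps, order $4$, orbit structure) implicit, so your write-up is, if anything, more complete than the original.
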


Note that for a 4-equivalenced association scheme $(\Omega,S)$ with $|S| = 2$ or $3$, $Aut(\Omega,S)$ is not Frobenius but  $(\Omega,S)$ is Frobenius. Refer \cite{HK} and \cite{T01}.

\begin{theorem}\label{corollary3.24}
When $|S| \ge 4$, if there is an element in $Aut(\Omega,S)$ which fixes two points, then it must be identity map.

\end{theorem}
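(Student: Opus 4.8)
The plan is to fix $g\in \mathrm{Aut}(\Omega,S)$ fixing two distinct points $\alpha,\beta$, to set $s=\mathrm{r}(\alpha,\beta)\in S^{\#}$, and to prove $g=\mathrm{id}$. Throughout I use that the scheme is symmetric (the valency is even), so that for any $u,v$ the set $\alpha u\cap\beta v$ has cardinality $c_{uv}^{s}$. Since $g$ is an automorphism fixing $\alpha$ and $\beta$, for every point $\gamma$ the image $g\gamma$ satisfies $\mathrm{r}(\alpha,g\gamma)=\mathrm{r}(\alpha,\gamma)$ and $\mathrm{r}(\beta,g\gamma)=\mathrm{r}(\beta,\gamma)$; hence $g\gamma\in\alpha\,\mathrm{r}(\alpha,\gamma)\cap\beta\,\mathrm{r}(\beta,\gamma)$, a set already containing $\gamma$. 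So it suffices to show that this intersection is a singleton for every $\gamma$, which reduces the whole problem to controlling the relevant intersection numbers. I keep in hand the order-$4$ automorphism $\sigma_\alpha$ from the preceding theorem: it fixes $\alpha$ and runs through each nontrivial fiber $\alpha u$ as a single $4$-cycle, and $g$ (fixing $\alpha$) permutes each fiber $\alpha u$.

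The key tool is the rigidity recorded in Remark \ref{remark3.9} and in STEP 2 of Theorem \ref{theorem3.10}. When $u\wr v$, Lemma \ref{lemma3.7} gives $u\cdot v=a_1+a_2+a_3+a_4$ with four distinct $a_i$, and the relations on $\alpha u\times\alpha v$ form the circulant pattern of Figure \ref{figure2}: indexing $\alpha u$ and $\alpha v$ by $\mathbb{Z}/4$ along $\sigma_\alpha$-orbits, $\mathrm{r}$ of two points depends only on the index difference and takes a different value on each difference class. Because $g$ fixes $\alpha$ and preserves all relations, the permutations it induces on $\alpha u$ and on $\alpha v$ must preserve this circulant; comparing difference classes forces both to be the same cyclic shift, i.e. $g$ acts as a common power $\sigma_\alpha^{c}$ on $\alpha u\cup\alpha v$. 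In particular, if $u\wr v$ and $g$ already fixes one point of $\alpha u$, then $c=0$ and $g$ fixes $\alpha u$ and $\alpha v$ pointwise.

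The generic case is then a propagation. Assuming there is a relation $t$ with $s\wr t$, the rigidity applied to $(s,t)$ together with the fact that $g$ fixes $\beta\in\alpha s$ forces $g$ to fix $\alpha s$ and $\alpha t$ pointwise; chaining the rigidity through the graph on $S^{\#}$ whose edges are the pairs $u\wr v$, and using the simultaneous compatibility $\sim_\alpha$ that was used to assemble $\sigma_\alpha$ from Theorems \ref{theorem3.10} and \ref{theorem3.16}, $g$ fixes pointwise every fiber $\alpha u$ reachable from $s$ in this graph. Any fiber not reached is ``hard'': its relation lies with $s$ in a common closed subset, where Lemma \ref{lemma2.6} permits $c_{uv}^{s}=2$. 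Such points are pinned by bootstrapping: once $g$ fixes the many points of $\alpha s\cup\alpha t$, for a remaining $\gamma$ one selects two already-fixed points $\beta_1,\beta_2$ with $\mathrm{r}(\beta_1,\gamma)\wr\mathrm{r}(\beta_2,\gamma)$, so that Lemma \ref{lemma2.6} makes $\beta_1\,\mathrm{r}(\beta_1,\gamma)\cap\beta_2\,\mathrm{r}(\beta_2,\gamma)=\{\gamma\}$ and hence $g\gamma=\gamma$; the hypothesis $|S|\ge 4$ is exactly what guarantees enough relations, and enough already-fixed points, to choose such a pair for every $\gamma$.

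The main obstacle is the degenerate case $S=\langle s\rangle$, where no pair $u\wr v$ exists at all and the latin-square rigidity is vacuous. Here I would instead use the internal $90^{\circ}$ rotation $\sigma$ of the $s$-plane (from which the preceding theorem's $\sigma_\alpha$ is built), whose relations on $\alpha\langle s\rangle$ are $\sigma$-invariant, and pin points inside a single plane by intersection numbers: starting from the fixed pair $\alpha,\beta$, repeatedly apply Lemma \ref{lemma2.6} to show the relevant intersections $\alpha u\cap\beta v$ are singletons, enlarging the set of fixed points until it exhausts $\alpha\langle s\rangle=\Omega$. Making this iteration terminate, and verifying that the only obstruction (the $c_{uv}^{s}=2$ configurations, forced by coincidences of the type $s=u^{\varphi}$) is always resolved by a second fixed reference point, is the delicate part; it is also precisely where $|S|\ge 4$ is indispensable, since for $|S|=2,3$ the conclusion genuinely fails, as noted in the remark preceding the theorem.
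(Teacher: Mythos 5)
Your proposal contains a genuine gap: the part of the argument that would actually finish the proof is the part you explicitly leave open. Your opening reduction --- show that $\alpha\,\mathrm{r}(\alpha,\gamma)\cap\beta\,\mathrm{r}(\beta,\gamma)$ is a singleton for every $\gamma$ --- is false as stated, since by Lemma \ref{lemma2.6} the intersection number $c_{uv}^{s}$ equals $2$ whenever $t=s^{\varphi}$ (cases (ii),(iii)), so for some $\gamma$ that intersection really does contain a second point. You acknowledge this and substitute a propagation-plus-bootstrapping scheme, but you then concede that making the iteration terminate, handling the degenerate case $S=\langle s\rangle$ (where no pair $u\wr v$ exists and the circulant rigidity is vacuous), and resolving the $c_{uv}^{s}=2$ configurations are ``the delicate part.'' In particular when $|S|=4$ one can have $S^{\#}=\{s,s^{\varphi},s^{\psi}\}$ with no $t$ satisfying $|st|=4$ at all, so your propagation never starts; none of this is carried out, so what you have is a plan, not a proof.

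The step you are missing is the one that makes the paper's proof three lines long: pseudocyclicity bounds the indistinguishing number. The first half of your propagation is exactly the paper's (and is fine): if $|S|>4$ pick $t$ with $|st|=4$; since the four points of $\alpha t$ have pairwise distinct relations to the fixed point $\beta\in\alpha s$, the automorphism fixes $\alpha t$ pointwise, giving at least four fixed points. Now stop propagating. If $\tau\neq\mathrm{id}$, take $\beta_0$ with $\beta_0^{\tau}=\gamma_0\neq\beta_0$ and $r=\mathrm{r}(\beta_0,\gamma_0)\in S^{\#}$; every fixed point $\delta$ satisfies $\mathrm{r}(\delta,\beta_0)=\mathrm{r}(\delta,\gamma_0)$, hence contributes $1$ to $c_{tt^{*}}^{r}$ for $t=\mathrm{r}(\delta,\beta_0)$, so $c(r)=\sum_{t\in S^{\#}}c_{tt^{*}}^{r}\ge 4$. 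But Theorems \ref{theorem2.3} and 2.4 give $c(r)=k-1=3$, a contradiction. The case $|S|=4$ is then settled separately by observing $S_2=\emptyset$ and using $c_{ss}^{s^{\psi}}=1$ to produce the four fixed points. Without this indistinguishing-number argument (or a completed version of your bootstrapping, which would be considerably harder), the proposal does not establish the theorem.
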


\begin{proof}

Choose an element $\tau\in Aut(\Omega,S)$ which fixes two distinct points and is not identity map.
 Let $\alpha_1^{\tau} =\alpha_1,\alpha_2^\tau=\alpha_2$, $\beta^\tau=\gamma$, $\mathrm{r}(\alpha_1,\alpha_2)=s$ and $\mathrm{r}(\beta,\gamma)=r\not=1_\Omega$. If there are at least 4 fixed points, then $\sum_{t\in S^{\#}}c_{tt}^r \ge 4$. It's a contradiction. So, we only need to show that there are at least 4 fixed points.
If $|S|>4$, there is $t\in S^{\#}$ such that $|st|=4$. Then, the elements in $\alpha_2 t$ are also fixed by $\tau$, otherwise $\tau$ cannot preserve the relations on $\alpha_2 s \times  \alpha_2 t$. If $|S|=$ 4, then $S_2=\emptyset$. Since $c_{ss}^{s^\psi}=1$ and $s\ne s^\psi$, we get the four fixed points.

\end{proof}

\begin{theorem}\cite{P15}
Let $(\Omega,S)$ be a 4-equivalenced association scheme and $S^{\#}$=$S_2$.\\Then, $(\Omega,S)$ is Frobenius and $|\Omega|=5^l$ for some $l\in \mathbb{Z}^+$.
\end{theorem}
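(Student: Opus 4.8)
The plan is to recognize $(\Omega,S)$ as the scheme of the one–dimensional affine group $AGL(1,5)=C_5\rtimes C_4$ in the linear action: I want to realize $\Omega$ as an elementary abelian $5$-group $V\cong\mathbb{F}_5^{\,l}$ on which the Frobenius group $G=V\rtimes C_4$ acts, with $C_4\cong\mathbb{F}_5^{\times}$ the group of scalar multiplications and $S=S_G$. In that model each $s\in S^{\#}$ is the orbital $\{(x,y):y-x\in\langle v_s\rangle\setminus\{0\}\}$ attached to a one–dimensional direction, so $n_s=4$; $G$ is Frobenius because a nonzero scalar fixes no nonzero vector; and the point stabilizer being $C_4$ forces $|\Omega|=|V|=5^{\,l}$. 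Thus the entire content is to reconstruct this $\mathbb{F}_5$-linear structure on $\Omega$ out of the sole hypothesis $S^{\#}=S_2$.

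The first step is to extract the combinatorial skeleton. Recall $(\Omega,S)$ is symmetric since $k=4$ is even. For $s\in S_2$ we have $s\cdot s=4\cdot 1_\Omega+3s$, i.e. $A_s^2=4I+3A_s$, whence $(A_s+I)^2=5(A_s+I)$; since $A_s+I$ is a symmetric $0$–$1$ matrix this says that $A_s+I$ is, after reordering, a direct sum of all–ones $5\times5$ blocks. Equivalently $\langle s\rangle=\{1_\Omega,s\}$ is a closed subset of order $5$, and $\sim_s:=1_\Omega\cup s$ is an equivalence relation all of whose classes (``lines'') have size $5$. Because $\mathrm{r}(\alpha,\beta)$ is the unique relation on a given pair, any two distinct points lie on a unique line, and for fixed $s$ the $\sim_s$-classes form one parallel class of lines; there are $|S^{\#}|=(|\Omega|-1)/4$ such parallel classes. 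So $\Omega$ already carries the incidence data of a linear space with parallelism whose lines have size $5$, which is exactly the skeleton of an affine space over $\mathbb{F}_5$.

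I would then induct on $|\Omega|$. The base case $|S^{\#}|=1$ is $|\Omega|=5$ with $S=\{1_\Omega,s\}$, realized by $AGL(1,5)$. For the inductive step fix $s\in S^{\#}$, set $T=\langle s\rangle=\{1_\Omega,s\}$, and pass to the quotient scheme $(\Omega/\!/T)$ on the set of $\sim_s$-classes. The decisive input is a two–dimensional \emph{plane lemma}: for distinct $s,u\in S_2$ the closed subset $\langle s,u\rangle$ has order $25$ and contains exactly six non-identity relations, namely the six ``directions'' of a $5\times5$ affine plane, all again in $S_2$. Granting this, every non-identity relation of $\Omega/\!/T$ has valency $4$ and is again of $S_2$-type (its preimage is such a $\langle s,u\rangle$, of quotient order $5$), so the quotient satisfies the same hypotheses; by induction $|\Omega/T|=5^{\,l-1}$ and $\Omega/\!/T$ is Frobenius, and therefore $|\Omega|=5\cdot 5^{\,l-1}=5^{\,l}$.

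For the plane lemma I would work inside the double coset $TuT$ using Lemma~\ref{lemma2.6}: for distinct $s,u\in S_2$ one has $s\cdot u=u_1+u_2+u_3+u_4$ with the $u_i$ distinct of valency $4$, and the point is that $\{1_\Omega,s,u,u_1,u_2,u_3,u_4\}$ closes up (all further products of these six directions stay among them), giving $n_{\langle s,u\rangle}=1+6\cdot4=25$; concretely this identifies the sub-scheme on a $25$-point plane as $AG(2,5)$. The genuinely delicate part, and the step I expect to be the main obstacle, is the Frobenius \emph{lifting}: from a regular elementary abelian $\bar V\cong\mathbb{F}_5^{\,l-1}$ with a fixed-point-free $C_4$ on $\Omega/T$, together with the $C_5$ acting along the fibres of $T$, one must build a regular elementary abelian $V\cong\mathbb{F}_5^{\,l}$ on $\Omega$ and a single fixed-point-free $C_4$ compatible with all of $S$. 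This amounts to showing the $\mathbb{F}_5$-extension $\Omega\to\Omega/T$ is the split linear one and that the scalar $C_4$-action can be chosen coherently over all fibres; the coherence is precisely what the affine-plane structure of each $\langle s,u\rangle$ supplies, by dictating how the $C_4$-action on one fibre propagates to the neighbouring fibres. Verifying that these local affine planes glue into a global $AG(l,5)$, so that $\mathrm{Aut}(\Omega,S)$ contains the full translation group, is the crux; everything else is bookkeeping with the intersection numbers of Lemma~\ref{lemma2.6}.
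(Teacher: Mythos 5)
First, a point of comparison: the paper does not prove this statement at all --- it is imported verbatim from \cite{P15} --- so there is no in-paper argument to measure you against, only the external one. Your target picture (realize $\Omega$ as $\mathbb{F}_5^{\,l}$ with $S$ the orbitals of $V\rtimes\mathbb{F}_5^{\times}$ acting by translations and scalars) is the right one, and your opening reductions are sound: $(A_s+I)^2=5(A_s+I)$ does show each $\{1_\Omega,s\}$ is closed of order $5$, and Lemma~\ref{lemma2.6} does give $s\cdot u=u_1+u_2+u_3+u_4$ with the $u_i$ distinct and (one checks, via $s\in su\Leftrightarrow u\in ss=\{1_\Omega,s\}$) different from $1_\Omega$, $s$, $u$. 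This is also broadly the spirit of \cite{P15}, which builds exactly this affine/plane structure.

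As written, however, the argument has two genuine gaps, both of which you flag but neither of which you close. (1) \emph{The plane lemma.} Everything downstream needs $\{1_\Omega,s,u,u_1,\dots,u_4\}$ to be closed. Without it, the double coset $TuT$ (with $T=\{1_\Omega,s\}$) need not consist of only five relations, the quotient relation $\bar u$ on $\Omega/\!/T$ need not have valency $4$, and the induction does not even start. Proving closure --- that $u_i\cdot s$ and $u_i\cdot u_j$ land back among the seven listed relations --- is a real intersection-number computation, not bookkeeping; it is essentially the content of the plane construction in \cite{P15} and is the load-bearing step you defer. (2) \emph{The lifting step.} Even granting the induction, ``quotient Frobenius of order $5^{\,l-1}$ plus $C_5$ fibres'' does not by itself produce a regular elementary abelian group on $\Omega$ together with a single fixed-point-free $C_4$ compatible with all of $S$: one must exclude nonsplit or nonabelian extensions of the translation groups and show that the local $AG(2,5)$ planes glue into a global $AG(l,5)$ --- schurity of the scheme is exactly what is at stake, and local plane structure alone does not obviously force it. Saying ``the coherence is precisely what the affine-plane structure supplies'' names this difficulty without resolving it. So the proposal is a plausible blueprint whose two hardest steps are missing rather than a proof.
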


So, we reach our main goal.

\begin{corollary}\label{corollary3.21}
Every 4-equivalenced association scheme is Frobenius.
\end{corollary}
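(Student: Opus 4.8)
The plan is to produce, in every case, a Frobenius group $G$ acting on $\Omega$ with $S=S^{G}$, so that $(\Omega,S)=(\Omega,S^{G})$ realizes the scheme as Frobenius. Recall that every $s\in S^{\#}$ satisfies either $s\cdot s=4\cdot 1_{\Omega}+2s^{\varphi}+s^{\psi}$ (so $s\in S_{3}$) or $s\cdot s=4\cdot 1_{\Omega}+3s$ (so $s\in S_{2}$); hence $S^{\#}=S_{2}\cup S_{3}$. I would first dispose of the degenerate cases. If $|S|\le 3$, the scheme is Frobenius by \cite{HK} and \cite{T01}, as already noted. If $S_{3}=\emptyset$, then $S^{\#}=S_{2}$ and the scheme is Frobenius by the theorem of \cite{P15} quoted above (with $|\Omega|=5^{l}$). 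Thus it remains to treat the essential case $|S|\ge 4$ and $S_{3}\neq\emptyset$.

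In that case I would take $G=\mathrm{Aut}(\Omega,S)$ and verify the two requirements separately: that $S=S^{G}$, and that $G$ is a Frobenius group. For the first, the theorem on the existence of $\sigma_{\alpha}$ supplies, for a fixed $\alpha\in\Omega$, an automorphism $\sigma_{\alpha}\in G$ of order $4$ fixing $\alpha$ whose orbits on $\Omega$ are exactly $\{\alpha s\mid s\in S\}$. Since each $\alpha s$ with $s\in S^{\#}$ has size $n_{s}=4$, the element $\sigma_{\alpha}$ acts on it as a single $4$-cycle; in particular $\sigma_{\alpha}$ lies in the stabiliser $G_{\alpha}$ and is transitive on every relation-neighbourhood $\alpha s$. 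Because $G$ preserves relations, each orbit of $G_{\alpha}$ is contained in some $\alpha s$; combined with the transitivity of $\sigma_{\alpha}$ on each $\alpha s$, this forces the orbits of $G_{\alpha}$ to be precisely $\{\alpha s\mid s\in S\}$. Since $(\Omega,S)$ is transitive by \cite{P15}, the orbitals of $G$ correspond to the orbits of $G_{\alpha}$, and therefore $S^{G}=S$.

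For the Frobenius property I would invoke Theorem~\ref{corollary3.24}: as $|S|\ge 4$, any element of $G=\mathrm{Aut}(\Omega,S)$ fixing two distinct points is the identity, i.e. no non-identity element of $G$ fixes more than one point, equivalently $G_{\alpha}\cap G_{\beta}=1$ for $\alpha\neq\beta$. Together with the facts that $G$ is transitive and that $G_{\alpha}\neq 1$ (it contains $\sigma_{\alpha}$), this is exactly the definition of a Frobenius group. Hence $G$ is Frobenius and $(\Omega,S)=(\Omega,S^{G})$ is Frobenius, which settles this case and completes the corollary.

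I expect the genuine work to be concentrated upstream, in the construction of the $\sigma_{\alpha}$ — that is, in the plane-compatibility results $s\sim_{\alpha}t$ of Theorems~\ref{theorem3.10} and \ref{theorem3.16}, which guarantee that the rotations $\sigma$ on the various $s$-planes glue into one global order-$4$ automorphism. Given those, the only subtle points in assembling the corollary are the bookkeeping identity $S^{G}=S$ (for which the key observation is simply that a single $\sigma_{\alpha}$ already acts transitively on each $4$-point class) and the requirement $|S|\ge 4$ in Theorem~\ref{corollary3.24}, which is precisely why the cases $|S|\le 3$ and $S^{\#}=S_{2}$ must be peeled off and handled by the cited results.
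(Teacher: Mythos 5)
Your proposal is correct and follows exactly the route the paper intends: the paper states the corollary without an explicit proof, but the ingredients it lines up immediately beforehand (the order-$4$ automorphism $\sigma_\alpha$ when $S_3\neq\emptyset$, Theorem~\ref{corollary3.24} for trivial two-point stabilisers when $|S|\ge 4$, the citations for $|S|\le 3$, and the $S^{\#}=S_2$ theorem from \cite{P15}) are assembled by you in the same way. Your write-up simply makes explicit the bookkeeping ($S^{G}=S$ via transitivity of $G_\alpha$ on each $\alpha s$, and the Frobenius condition from $G_\alpha\cap G_\beta=1$) that the paper leaves to the reader.
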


\section{Other results on a 4-equivalenced association scheme}

See \cite{MP12} for the definitions of coherent configuration, fiber, semiregularity,  separability, isomorphism and algebraic isomorphism of a coherent configuration.\\



Let $(\Omega,S)$ be a 4-equivalenced association scheme.

Use the notations $\mathrm{r}(a,b)_\alpha$ (resp, the element $s_\alpha$ of $S_\alpha$)  for an $\alpha$-fission $(\Omega,S_\alpha)$ instead of $\mathrm{r}(a,b)$ (resp, the element $s$ of $S$) for an association scheme $(\Omega,S )$ and $p_{{u_\alpha}{v_\alpha}}^{w_\alpha}$ for an $\alpha$-fission $(\Omega,S_\alpha)$ instead of $c_{uv}^{w}$ for an association scheme $(\Omega,S )$.

\begin{remark}$\empty$

\begin{enumerate} 

\item For any $s_\alpha \in S_{\alpha}$, there exists a unique pair of fibers $\Gamma, \Delta$ of an $\alpha$-fission $(\Omega,S_\alpha)$ such that $s_\alpha \subset \Delta \times \Gamma$.
\item Since $\{\alpha\}$ is also a fiber of an $\alpha$-fission $(\Omega,S_{\alpha})$, for any fiber $\Gamma$ in $S_{\alpha}$, there exists $u\in S$ such that $\Gamma \subset \alpha u$.

\end{enumerate}

\end{remark}

\begin{lemma}\label{lemma3.4}
When $|S|\ge 5$, for any $u\in S^{\#}$ there is $v\in S^{\#}$ with $|uv|=4$.
\end{lemma}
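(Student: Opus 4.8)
The plan is to reduce the statement entirely to the multiplication rules of Lemma~\ref{lemma2.6} and then run a short counting argument. The first step is to record a clean characterization of when a product of two non-diagonal relations has exactly four (distinct) constituents. Take $u\in S^{\#}$ fixed and consider candidates $v\in S^{\#}$. If $v=u$, then $u\cdot u$ is governed by Lemma~\ref{std} (the first lemma on $s\cdot s$), so $|uu|\le 3$ and $v=u$ never works. If $v\neq u$, then Lemma~\ref{lemma2.6} applies and its three cases are exhaustive: case (i) gives $|uv|=4$, while cases (ii) and (iii) give $|uv|=3$ and $|uv|=2$ respectively. Hence, for $v\neq u$, we have $|uv|=4$ if and only if $u\neq v^{\varphi}$ and $v\neq u^{\varphi}$, exactly the hypothesis of case (i).

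Next I would turn this characterization into a count of the ``forbidden'' values of $v$. Since $\varphi$ is a permutation of $S$, the condition $u=v^{\varphi}$ is equivalent to $v=u^{\varphi^{-1}}$. Combining this with the observations above, the values of $v\in S^{\#}$ for which $|uv|=4$ can \emph{fail} are precisely the elements of
\[
B:=\{\,u,\ u^{\varphi},\ u^{\varphi^{-1}}\,\},
\]
a set of cardinality at most $3$. (When $u\in S_2$ one has $u^{\varphi}=u^{\varphi^{-1}}=u$, so $B=\{u\}$ and the bound is even slacker; when $u\in S_3$, Lemma~\ref{std} on $S_3$ guarantees $u^{\varphi},u^{\varphi^{-1}}\in S_3\subset S^{\#}$, so in all cases $B\subset S^{\#}$.)

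Finally I would conclude by comparison of sizes. Since $|S^{\#}|=|S|-1\ge 4$ while $|B|\le 3$, the set $S^{\#}\setminus B$ is nonempty; choosing any $v$ in it yields $v\neq u$, $u\neq v^{\varphi}$ and $v\neq u^{\varphi}$, hence $|uv|=4$ by case (i) of Lemma~\ref{lemma2.6}. I do not expect a genuine obstacle here: the only points demanding care are the exhaustiveness of the three cases of Lemma~\ref{lemma2.6} (so that $|uv|=4$ really forces case (i) and no other product shape can contribute a fourth constituent), the rewriting $u=v^{\varphi}\Leftrightarrow v=u^{\varphi^{-1}}$ which uses that $\varphi$ is a genuine permutation, and the observation that the threshold $|S|\ge 5$ is exactly what makes $|S^{\#}|>|B|$. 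All the mathematical content lives in Lemma~\ref{lemma2.6}; the remainder is bookkeeping.
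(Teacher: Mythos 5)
Your proof is correct and follows exactly the paper's argument: exclude the at most three forbidden relations $u$, $u^{\varphi}$, $u^{\varphi^{-1}}$ and use $|S^{\#}|\ge 4$ together with case (i) of Lemma~\ref{lemma2.6}. You simply spell out the bookkeeping (exhaustiveness of the cases and the rewriting $u=v^{\varphi}\Leftrightarrow v=u^{\varphi^{-1}}$) that the paper leaves implicit.
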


\begin{proof}
Since $|S|\ge 5$, we can choose $v \not \in$ $\{u,u^{\varphi}, u^{\varphi^{-1}}\}$. We have it by Lemma~\ref{lemma2.6}.

\end{proof}


Choose a point $\alpha\in \Omega$. Then, we get the following theorem.

\begin{theorem}\label{theorem3.6} 
A fission $(\Omega,S_{\alpha})$ of an association scheme $(\Omega,S)$ is semiregular on $\Omega$ $\backslash$ $\{\alpha\}$ if $|S|\ge 3$.

\end{theorem}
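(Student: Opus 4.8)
The plan is to prove the statement by showing directly that every basis relation of the fission $(\Omega,S_\alpha)$ that is contained in $(\Omega\setminus\{\alpha\})\times(\Omega\setminus\{\alpha\})$ has valency at most $1$, which is exactly semiregularity on $\Omega\setminus\{\alpha\}$. By the Remark preceding Lemma~\ref{lemma3.4}, every fiber of the fission lies inside some set $\alpha u$ with $u\in S^{\#}$, so it suffices to control each block $\alpha u\times\alpha v$ ($u,v\in S^{\#}$) separately. First I would dispose of the \emph{generic} blocks, those with $|uv|=4$: by Lemma~\ref{lemma3.7} together with the cyclic picture recorded in Figure~\ref{figure2}, each point of $\alpha u$ is joined to the four points of $\alpha v$ by the four \emph{distinct} relations $a_1,a_2,a_3,a_4$ of $S$. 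Hence already in $S$, and a fortiori in the finer configuration $S_\alpha$, every relation inside such a block has valency $1$, and nothing more is needed there.

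This reduces the problem to the \emph{special} blocks, where $|uv|<4$. By Lemma~\ref{lemma2.6} and the structure lemma for $s\cdot s$, these occur only when $v\in\{u,\,u^{\varphi},\,u^{\varphi^{-1}}\}$, and in each of them the $S$-colouring carries a relation of valency $\ge 2$ (namely $u^{\varphi}$ in the diagonal block $\alpha u\times\alpha u$, and the repeated colour in cases (ii),(iii) of Lemma~\ref{lemma2.6}) that the fission must break into valency-$1$ relations. Here I would invoke the order-$4$ stabiliser $R$ produced in Section~3 (generated by $\sigma_\alpha$ when $S_3\neq\emptyset$, and the Frobenius point stabiliser of Corollary~\ref{corollary3.21} otherwise), which fixes $\alpha$, acts freely by Theorem~\ref{corollary3.24}, and hence regularly on every $\alpha u$. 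Since $R\le\mathrm{Aut}(\Omega,S_\alpha)$, every basis relation of $S_\alpha$ inside a block is $R$-invariant, so it is a union of the four $R$-orbitals of that block, each of which has valency $1$. Consequently valency $\le 1$ is \emph{equivalent} to $S_\alpha$ separating these orbitals, i.e.\ to the fission distinguishing the two rotation directions of the $4$-cycle $\sigma_\alpha$ on $\alpha u$.

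To achieve this separation I would compose, inside the adjacency algebra of the fission, a special block with a generic partner block $\alpha u\times\alpha w$ with $|uw|=4$. The cyclic arrangement of the four distinct colours in the generic block (Figure~\ref{figure2}) is \emph{chiral}: advancing one step along $\sigma_\alpha$ and retreating one step give different colours, so the fission intersection numbers obtained by routing $\beta_1\to\beta_2\to\alpha w$ differ from those routing $\beta_1\to\beta_4\to\alpha w$. This forces $\mathrm{r}(\beta_1,\beta_2)_\alpha\neq\mathrm{r}(\beta_1,\beta_4)_\alpha$, splitting the offending valency-$\ge2$ colour into valency-$1$ relations and finishing the block. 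Existence of a generic partner $w$ is supplied by Lemma~\ref{lemma3.4} when $|S|\ge 5$; in the purely-$S_2$ case every cross-block $\alpha u\times\alpha v$ with $u\neq v$ is already generic, since $u^{\varphi}=u$ puts it in case (i) of Lemma~\ref{lemma2.6}, so good partners are automatic there.

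The main obstacle is the boundary range $|S|\in\{3,4\}$, where Lemma~\ref{lemma3.4} does not apply and a generic partner block can genuinely fail to exist; for instance when $S^{\#}=S_3$ with $\varphi$ a $3$-cycle, every pairwise product satisfies $|uv|\le 3$. In these finitely many configurations (the $4$-equivalenced schemes on $9$ and $13$ points) I would recover the orientation not from a single auxiliary block but from the global cyclic structure carried by $\sigma_\alpha$ across all fibers simultaneously, or else verify semiregularity by direct inspection of the cyclotomic-type schemes involved. That the hypothesis $|S|\ge 3$ cannot be dropped is witnessed by the $5$-point scheme with $|S|=2$: there the unique off-diagonal colour restricts to valency $3$ inside $\alpha s$ and no further relation is available to split it, so its fission is \emph{not} semiregular. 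The extra relations present when $|S|\ge 3$ are precisely what make the splitting argument above possible.
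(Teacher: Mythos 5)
Your reduction is sound up to a point: the generic blocks ($|uv|=4$) need no work, the fission relations in a special block are unions of the four orbitals of an order-$4$ point stabilizer, and the whole problem is whether $S_\alpha$ separates the $+1$ and $-1$ orbitals inside a block such as $\alpha u\times\alpha u$. But the step that is supposed to achieve this separation --- the ``chirality'' argument --- is asserted rather than proved, and in its natural reading it fails. Suppose the offending colour does \emph{not} split, so that $c_\alpha:=\mathrm{r}(\beta_1,\beta_2)_\alpha=\mathrm{r}(\beta_1,\beta_4)_\alpha$ is the union of the two rotation orbitals, and compose with a generic block $\alpha u\times\alpha w$ carrying the circulant colouring of Figure~\ref{figure2}. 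The intersection numbers you propose to compare, e.g.\ $p_{c_\alpha x_\alpha}^{y_\alpha}$ with $x_\alpha\subset a_4$ and $y_\alpha\subset a_1$, are equal to $1$ at \emph{every} pair of $y_\alpha$: from each point of $\alpha u$ exactly one of its two $c_\alpha$-neighbours sits at colour $a_4$ from the relevant point of $\alpha w$, because the circulant pattern is itself invariant under the rotation $\sigma_\alpha$. So ``advancing'' and ``retreating'' are not distinguished by these counts, no incoherence appears, and the split is not forced. The difficulty is even worse in the diagonal block $\alpha u\times\alpha u$ with $u\in S_2$, where the colour $u$ has valency $3$ and all three nontrivial orbitals must be separated --- a case your sketch does not address at all beyond the $|S|=2$ counterexample.

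What actually forces the splitting in the paper is a numerical mechanism your proposal never invokes. Assuming a fission relation $t_\alpha\subset\alpha u\times\alpha v$ has two points $\beta_1,\beta_2$ in the image of a single $\alpha_1$, the paper transports this coincidence into a partner block $\alpha u\times\alpha w$ with $|uw|=4$ (Lemma~\ref{lemma3.4}, which is where $|S|\ge 5$ enters) and concludes that $\mathrm{r}(\beta_1,\delta_j)_\alpha=\mathrm{r}(\beta_2,\delta_j)_\alpha$ for the matched points $\delta_j\in\alpha w$. When $u=v$ this yields $c_{us}^{w}\ge 2$, contradicting the fact that every coefficient of $u\cdot w$ equals $1$; when $u\ne v$ it yields four distinct points $\delta_1,\dots,\delta_4$ witnessing $\sum_{r\in S^{\#}}c_{r^{*}r}^{t}\ge 4$ for $t=\mathrm{r}(\beta_1,\beta_2)$, contradicting the indistinguishing number $c(t)=k-1=3$ supplied by pseudocyclicity (Theorem~\ref{theorem2.3} together with the fact that every $4$-equivalenced scheme is pseudocyclic). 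This bound from pseudocyclicity is the essential input missing from your argument; without it, or the $c_{us}^{w}\le 1$ bound used in the same way, the orbital separation cannot be concluded from $\sigma_\alpha$-invariance alone. Deferring $|S|\in\{3,4\}$ to direct inspection is acceptable (the paper does the same), but the main case is not closed by your sketch.
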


\begin{proof}
When $|S|=$ 3 or 4, it's semiregular on $(\Omega,S)$ by some calculations.\\
When $|S|\ge 5$, suppose that there is $t_\alpha \in S_\alpha \cap (\alpha u \times \alpha v)$ such that $|\alpha_1 t_\alpha| \ge 2$ for some $\alpha_1\in \alpha u$.
Let $t_\alpha:=\mathrm{r}(\alpha_1,\beta_1)=\mathrm{r}(\alpha_1,\beta_2)$ where $\beta_1,\beta_2\in \alpha v$  and $v_\alpha:=\mathrm{r}(\alpha,\beta_1)_\alpha=\mathrm{r}(\alpha,\beta_2)_\alpha$. 
By Lemma~\ref{lemma3.4}, there exists $w \in S^{\#}$ such that $u\cdot w=r_1+r_2+r_3+r_4$ and $|uw|=4$.
Let ${r_i}_\alpha  \subset r_i\cap(\alpha u\times\alpha w)$ and $\delta_j\in\alpha w$ such that $\mathrm{r}(\alpha_1,\delta_j)_\alpha={r_j}_\alpha$ for $i,j=1,2,3,4$.
\begin{center}
\begin{tabular}{c|cccc|cccc}
                &   & $ \beta_1$     & $ \beta_2 $  & $ $ &  $\delta_1$ & $\delta_2$ & $\delta_3$ & $\delta_4$\\ 
 \hline
$\alpha$  &     &  $v_\alpha$ &$v_\alpha$&                          &   $w_\alpha$&&& \\ 
\hline
$\alpha_1$              &     &   $t_\alpha$    &   $t_\alpha$  & & ${r_1}_\alpha$     &   ${r_2}_\alpha$  &   ${r_3}_\alpha$   &${r_4}_\alpha$ \\

\end{tabular}
\end{center}

$case$ $1)$ When $u=v$, let $s_\alpha:=\mathrm{r}(\beta_1,\delta_1)_\alpha$ in $S_{\alpha}$. Then, $\mathrm{r}(\beta_2,\delta_1)_\alpha={s}_\alpha$ since $\mathrm{r}(\alpha_1,\beta_2)_\alpha=t_\alpha$ and $\delta_1$ is a unique point with $\mathrm{r}(\alpha_1,\delta_1)_\alpha={r_1}_\alpha$.

So, $p_{v_\alpha s_\alpha}^{w_\alpha}\ge2$ where $w_\alpha:=\mathrm{r}(\alpha, \delta_1)_\alpha \subset \{\alpha\}\times \alpha w$. Then, $c_{u s}^{w} \ge 2$ where ${s}_\alpha \subset s, s\in S$. But $|uw|=4$. This is a contradiction to our choice of $w$.\\

$case$ $2)$ When $u\not=v$.
Let ${s_j}_\alpha:=\mathrm{r}(\beta_1,\delta_j)_\alpha$ for $j=1,2,3,4$. Then, $\mathrm{r}(\beta_2,\delta_j)_\alpha={s_j}_\alpha$ for $j=1,2,3,4$ since $\delta_j$ is a unique point with $\mathrm{r}(\alpha_1,\delta_j)_\alpha={r_j}_\alpha$. 
Let $t_\alpha=\mathrm{r}(\beta_1,\beta_2)_\alpha$.
 Then, for $j=1,2,3,4$, $p_{{s_j}_\alpha {s_j}_\alpha*}^{t_\alpha}\ge1$. We get a contradiction since every 4-equivalenced association scheme is pseudocyclic and we have $\sum_{r\in S^{\#}}c_{r^{\*}r}^t\ge4$ where $t \in S $ and $t_\alpha \subset t$.

\end{proof}

\begin{theorem}
Every 4-equivalenced association scheme $(\Omega, S)$ is separable.
\end{theorem}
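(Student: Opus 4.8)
The plan is to show that a $4$-equivalenced scheme is reconstructible, up to isomorphism, from its intersection numbers, and that this reconstruction can be transported along any algebraic isomorphism. Let $\theta\colon(\Omega,S)\to(\Omega',S')$ be an algebraic isomorphism onto a coherent configuration $(\Omega',S')$. Since $\theta$ sends $1_\Omega$ to $1_{\Omega'}$, commutes with $*$, and preserves all intersection numbers, it preserves each valency $n_s=c_{ss^*}^{1_\Omega}$; hence $(\Omega',S')$ is again a homogeneous $4$-equivalenced association scheme with $|\Omega'|=\sum_{s'\in S'}n_{s'}=\sum_{s\in S}n_s=|\Omega|$. Moreover $\theta$ maps $S_i$ onto $S_i'$ and intertwines the scheme maps, $\theta(s^\varphi)=\theta(s)^\varphi$ and $\theta(s^\psi)=\theta(s)^\psi$, because $\varphi$ and $\psi$ are read off from the decomposition $s\cdot s=4\cdot 1_\Omega+2s^\varphi+s^\psi$. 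Thus every structural statement proved above for $(\Omega,S)$ holds verbatim for $(\Omega',S')$. The cases $|S|=2,3$ are immediate (for $|S|=2$ any bijection $\Omega\to\Omega'$ induces $\theta$), so I assume $|S|\ge 4$.

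Next I would build the inducing bijection $f\colon\Omega\to\Omega'$ from the plane coordinates. Fix $\alpha\in\Omega$, $\alpha'\in\Omega'$ and put $f(\alpha)=\alpha'$. Using the construction preceding this section, choose a compatible family of $s_i$-planes based at $\alpha$ (with $s_i\wr s_j$ for $i\ne j$) so that $s_i\sim_\alpha s_j$ for all $i,j$, by Theorems~\ref{theorem3.10} and~\ref{theorem3.16}; make the parallel choice at $\alpha'$ for the images $\theta(s_i)$. The relations $s_i\sim_\alpha s_j$ say precisely that all these planes fit into a single coordinate system invariant under $\sigma$, realised by the order-$4$ automorphism $\sigma_\alpha$ furnished by the automorphism theorem in the case $S_3\ne\emptyset$. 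Each point of $\Omega\setminus\{\alpha\}$ then acquires a plane coordinate $(i,j)$ relative to $\alpha$, and I define $f$ by sending the point with coordinate $(i,j)$ in the $s$-plane to the point with coordinate $(i,j)$ in the $\theta(s)$-plane at $\alpha'$; this is the same as forcing $f\sigma_\alpha=\sigma_{\alpha'}f$ together with $\alpha s\mapsto\alpha'\theta(s)$.

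I would then verify that $f$ is a well-defined bijection and an isomorphism. Well-definedness and injectivity are exactly Theorem~\ref{theorem3.6}: since the $\alpha$-fission $(\Omega,S_\alpha)$ is semiregular on $\Omega\setminus\{\alpha\}$, distinct coordinates name distinct points, so each block $\alpha s$ (a single $\sigma_\alpha$-orbit of size $4$) is matched bijectively with $\alpha'\theta(s)$, and these blocks partition $\Omega\setminus\{\alpha\}$ and $\Omega'\setminus\{\alpha'\}$; semiregularity on the primed side makes $f$ a bijection. That $f$ respects relations through the base point is built in, since $\beta\in\alpha s$ iff $f(\beta)\in\alpha'\theta(s)$, while the intra-plane relations are fixed by the plane identity $\mathrm{r}(\alpha,\alpha_{i,j})=\mathrm{r}(\alpha,\alpha_{k,l})$ for $(k,l)$ in the $\sigma$-orbit of $(i,j)$. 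For an arbitrary pair $\beta\in\alpha u$, $\gamma\in\alpha v$ with $\beta\ne\gamma$, the relation $\mathrm{r}(\beta,\gamma)$ is pinned down by the local structure of $\alpha u\times\alpha v$, which by Remark~\ref{remark3.9} (together with the decomposition analysis in STEP 2 of the proof of Theorem~\ref{theorem3.10} and the $X_i$-computation in Theorem~\ref{theorem3.16}) is the unique configuration compatible with the numbers $c_{uv}^{\,w}$; as $\theta$ preserves those numbers and $f$ preserves the coordinate systems on both sides, the structure on $\alpha'\theta(u)\times\alpha'\theta(v)$ is the matching one, so $\mathrm{r}(f(\beta),f(\gamma))=\theta(\mathrm{r}(\beta,\gamma))$.

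The main obstacle is this last verification off the base point: one must check that the rigidity of $\alpha u\times\alpha v$ is genuinely forced by the intersection numbers in \emph{every} case, including the degenerate products $|uv|\ne 4$ (when $u\in\{v,v^\varphi,v^{\varphi^{-1}}\}$) where Remark~\ref{remark3.9} does not directly apply and one must instead invoke the multiplicities of Lemma~\ref{lemma2.6}. Once the relational type of each $\alpha u\times\alpha v$ is shown to be an invariant of the intersection numbers, transporting it along $\theta$ is automatic and $f$ is the desired isomorphism inducing $\theta$, which proves separability. It is worth noting that, having already established that every $4$-equivalenced scheme is Frobenius, both schemes are schurian with point stabilizer $G_\alpha=\langle\sigma_\alpha\rangle\cong\mathbb{Z}/4$; the semiregular (hence thin, hence separable) $\alpha$-fission is the structural reason the coordinate-matching argument succeeds, and it offers an alternative route through the one-point extension.
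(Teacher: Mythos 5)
Your overall strategy---reconstructing the scheme from its intersection numbers via plane coordinates and transporting the reconstruction along an algebraic isomorphism $\theta$---is a reasonable plan, but as written it is not a proof: you explicitly leave the decisive step unverified. The crux of separability is exactly the claim you defer (``once the relational type of each $\alpha u\times\alpha v$ is shown to be an invariant of the intersection numbers, transporting it along $\theta$ is automatic''). Remark~\ref{remark3.9} only gives uniqueness of the block $\alpha u\times\alpha v$ up to independent rotations of the ordered sets $\alpha u$ and $\alpha v$, and only when $|uv|=4$; you still have to (a) handle the blocks with $|uv|<4$, and (b) show that the rotation ambiguities of \emph{all} the blocks can be resolved simultaneously by a single bijection $f$, on both sides of $\theta$. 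Theorems~\ref{theorem3.10} and~\ref{theorem3.16} establish this simultaneous consistency inside one scheme (that is what $\sim_\alpha$ and the order-$4$ automorphism $\sigma_\alpha$ encode), but they do not by themselves show that the resulting configuration is determined by the intersection numbers, which is what matching the two schemes requires. Since none of this is carried out, there is a genuine gap.

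The paper takes a much shorter route: it invokes Theorem 7.2 of \cite{MP12}, whose hypotheses are supplied by Corollary~\ref{corollary3.21} (every $4$-equivalenced scheme is Frobenius, hence schurian) and Theorem~\ref{theorem3.6} (the $\alpha$-fission is semiregular on $\Omega\setminus\{\alpha\}$), the case $|S|=2$ being trivial. Your closing remark about the ``alternative route through the one-point extension'' is in fact the paper's actual proof; if you wish to avoid citing \cite{MP12}, you would essentially have to reprove that result, i.e.\ complete the rigidity and consistency verification that you yourself flagged as the main obstacle.
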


\begin{proof}
When $|S|=2$, it's trivial. When $|S|\ge3$, it follows from Theorem 7.2 in \cite{MP12}, Corollary \ref{corollary3.21} and Theorem \ref{theorem3.6}.

\end{proof}



\begin{definition}\cite{P13}
Let $\mathcal{X}$ be a coherent configuration. A point set $\Delta$ is called a \textit{base} of $\mathcal{X}$ if the smallest fission of $\mathcal{X}$ in which all points of $\Delta$ are fibers, is the complete coherent configuration. Here, the complete configuration is coherent configuration whose relation sets are all binary relations on $\Omega\times\Omega$.\\
And, $b(\mathcal{X})=$minimum size of $|\Delta|$ for $\Delta\subset\Omega$ such that $S_{\Delta}$-fission is a complete configuration of $\mathcal{X}$. 
\end{definition}

\begin{theorem}
Let $\mathcal{X}=(\Omega,S)$ be a 4-equivalenced association scheme with $S_2\ne \emptyset$ and $|S|\not =2.$ Then, $b(\mathcal{X})=2$.
\end{theorem}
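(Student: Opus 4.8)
The plan is to prove both inequalities $b(\mathcal{X})\ge 2$ and $b(\mathcal{X})\le 2$, after first reducing to the case $|S|\ge 4$. I claim the hypotheses already force this. Suppose $|S|=3$, say $S=\{1_\Omega,s,t\}$ with $s\in S_2$, so that $s^\varphi=s$. A short check rules this out: if $t^\varphi=s$ then (using $t^\psi=(t^\varphi)^\varphi=s^\varphi=s$) we would get $t\cdot t=4\cdot 1_\Omega+2s+s=4\cdot 1_\Omega+3s$, forcing $t\in S_2$ and hence $t^\varphi=t\ne s$, a contradiction; so in every case $s\ne t^\varphi$, while $t\ne s^\varphi=s$ is clear. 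Then Lemma~\ref{lemma2.6}(i) gives $s\cdot t=u_1+u_2+u_3+u_4$ with four distinct $u_i\in S^{\#}$, impossible since $|S^{\#}|=2$. As $|S|\ne 2$ by hypothesis, we conclude $|S|\ge 4$, so that Theorem~\ref{corollary3.24} is available.

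For the lower bound, recall from Corollary~\ref{corollary3.21} that $(\Omega,S)$ is Frobenius, so $\mathrm{Aut}(\Omega,S)$ contains a Frobenius group $G$ whose complement $H=G_\alpha$ (of order equal to the common valency $4$) is nontrivial and fixes any chosen $\alpha$. Any $\sigma\in H\setminus\{e\}$ is then a nontrivial automorphism of $\mathcal{X}$ fixing $\alpha$, hence an automorphism of the $\alpha$-fission $\mathcal{X}_\alpha$. If $\mathcal{X}_\alpha$ were the complete configuration, all its fibers would be singletons, so every automorphism would have to fix every point, forcing $\sigma=e$ — a contradiction. Thus no single point is a base, and $b(\mathcal{X})\ge 2$.

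For the upper bound the essential input is separability, which was established above for every $4$-equivalenced scheme. The point is that for a separable coherent configuration the combinatorial base coincides with the base of its automorphism group: a set $\Delta$ is a base of $\mathcal{X}$ exactly when $\mathrm{Aut}(\mathcal{X})_\Delta=\{e\}$. The easy direction is that completeness of $\mathcal{X}_\Delta$ kills all automorphisms; the nontrivial direction is precisely separability applied to the fission $\mathcal{X}_\Delta$, whose automorphism group is $\mathrm{Aut}(\mathcal{X})_\Delta$. By Theorem~\ref{corollary3.24}, since $|S|\ge 4$, any automorphism fixing two distinct points $\alpha,\beta$ is the identity, so $\mathrm{Aut}(\mathcal{X})_{\{\alpha,\beta\}}=\{e\}$. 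Hence $\{\alpha,\beta\}$ is a base and $b(\mathcal{X})\le 2$; together with the lower bound this yields $b(\mathcal{X})=2$.

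I expect the main obstacle to be the upper bound, and specifically the passage from the group-theoretic statement $\mathrm{Aut}(\mathcal{X})_{\{\alpha,\beta\}}=\{e\}$ to the genuinely combinatorial statement that the two-point fission is the complete configuration. A priori the stabilization of $\mathcal{X}$ after individualizing $\alpha$ and $\beta$ could be strictly coarser than the orbit partition of $\mathrm{Aut}(\mathcal{X})_{\{\alpha,\beta\}}$, so that trivial stabilizer alone would not give singleton fibers. It is exactly separability that rules this out, so the care in the write-up must go into invoking the separability result in the correct form for the fission $\mathcal{X}_\Delta$ rather than for $\mathcal{X}$ itself.
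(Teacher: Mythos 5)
Your reduction to $|S|\ge 4$ and your lower bound are fine; the lower bound is a legitimate variant of the paper's (the paper instead quotes Theorem~\ref{theorem3.6} to see that the fibers of the one-point fission are the sets $\alpha u$ of size $4$, hence not singletons, while you use the nontriviality of the Frobenius complement $G_\alpha$ together with the rigidity of the complete configuration). The problem is the upper bound, which is exactly where you yourself locate the difficulty. The principle you invoke --- that for a separable coherent configuration a set $\Delta$ is a base if and only if $\mathrm{Aut}(\mathcal{X})_\Delta=\{e\}$ --- is not a theorem available in this paper or in the cited references, and you do not prove it. What the implication ``$\mathrm{Aut}(\mathcal{X})_{\{\alpha,\beta\}}=\{e\}\Rightarrow\mathcal{X}_{\{\alpha,\beta\}}$ is complete'' actually requires is \emph{schurity} of the two-point fission, i.e.\ that $\mathcal{X}_{\{\alpha,\beta\}}$ coincides with the orbit configuration of $\mathrm{Aut}(\mathcal{X})_{\{\alpha,\beta\}}$. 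Separability is a different closure property (every \emph{algebraic} isomorphism is induced by a combinatorial one); it neither implies schurity of $\mathcal{X}$ nor, a fortiori, schurity of its point extensions, and a separable configuration with trivial automorphism group need not be complete. So the decisive step of your upper bound is an unproven (and in general false) transfer from the group-theoretic statement of Theorem~\ref{corollary3.24} to the combinatorial one. A symptom of the gap: your upper-bound argument never uses the hypothesis $S_2\ne\emptyset$, yet the theorem carries it for a reason.

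The paper closes the upper bound by a direct combinatorial computation instead. Put $s=\mathrm{r}(\alpha,\beta)$ with $s\in S_2$; then for every $t\in S^{\#}$ with $t\ne s$ one has $|st|=4$ (here $s\in S_2$ is essential, via Lemma~\ref{lemma2.6}, since $s^\varphi=s$ and $t^\varphi\ne s$), so the four points $\gamma_1,\dots,\gamma_4$ of $\alpha t$ realize four pairwise distinct relations $\mathrm{r}(\beta,\gamma_i)$. Since $\{\beta\}$ is a fiber of the $\{\alpha,\beta\}$-fission, these four points fall into four distinct fibers, which, combined with the semiregularity from Theorem~\ref{theorem3.6}, makes them singleton fibers; the points of $\alpha s$ are then separated by their (matching) relations to these singletons. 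If you want to keep your automorphism-theoretic framing, you would need to first establish schurity of $\mathcal{X}_{\{\alpha,\beta\}}$, which is essentially as much work as the direct argument.
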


\begin{proof}
By Theorem \ref{theorem3.6}, we know $b(\mathcal{X})\ge2$ since $S_\alpha$-fission is semiregular on $\Omega\backslash\{\alpha\}$ and has fibers $\alpha u$, $u\in S$. Let $\Delta=\{\alpha, \beta\}$ such that $\mathrm{r}(\alpha,\beta)\in S_2$. Now, consider $S_{\{\alpha,\beta\}}$-fission. In $S_\alpha$-fission, for any $t \in S^{\#}$ with $t\ne \mathrm{r}(\alpha, \beta)$, $|\mathrm{r}(\alpha,\beta)t|=4$. Since \textrm{r}$(\beta,\gamma_i)$'s are distinct, where $\gamma_i \in \alpha t, i=1,2,3,4$, for any $t\in S$ except $\mathrm{r}(\alpha,\beta)$, each point in $\alpha t$ must be a fiber in $S_{\{\alpha,\beta\}}$-fission.\\
Similarly, we can show that every element in $\alpha s$ is also fiber.
\end{proof}

From Theorem \ref{theorem2.3}, the following is obvious.
\begin{corollary}\cite{MP12}
For any 4-equivalenced association scheme $(\Omega,S)$, a pair of $(\Omega,\mathcal{B})$ with $\mathcal{B}=\{\alpha s : \alpha \in \Omega, s\in S^{\#}\}$ is a 2-$(n,4,3)-$design where $n=|\Omega|$.

\end{corollary}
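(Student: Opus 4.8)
The plan is to check the three defining parameters of a $2$-$(n,4,3)$ design directly from the pseudocyclic structure. The point set is $\Omega$, so there are $n=|\Omega|$ points. Each block $\alpha s$ is the set $\{\beta\in\Omega : (\alpha,\beta)\in s\}$, which has exactly $n_s=4$ elements because $(\Omega,S)$ is $4$-equivalenced; hence every block has size $4$. It then remains only to pin down $\lambda$, i.e.\ to show that every pair of distinct points lies in exactly $3$ blocks.

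The heart of the argument is a single intersection-number count. Fix distinct $\gamma,\delta\in\Omega$ and put $r=\mathrm{r}(\gamma,\delta)\in S^{\#}$. A block $\alpha s$ contains both $\gamma$ and $\delta$ precisely when $(\alpha,\gamma)\in s$ and $(\alpha,\delta)\in s$, equivalently when $(\gamma,\alpha)\in s^{*}$ and $(\alpha,\delta)\in s$. For a fixed $s\in S^{\#}$ the number of such $\alpha$ is therefore the intersection number $c_{s^{*}s}^{r}$. Summing over $s$ and using that $s\mapsto s^{*}$ is a bijection of $S^{\#}$, I would obtain $\sum_{s\in S^{\#}}c_{s^{*}s}^{r}=\sum_{u\in S^{\#}}c_{uu^{*}}^{r}=c(r)$, the indistinguishing number of $r$.

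To finish, I invoke the pseudocyclic characterization. By \cite{P15} every $4$-equivalenced scheme is pseudocyclic of valency $4$, so Theorem \ref{theorem2.3} forces $c(r)=4-1=3$ for every $r\in S^{\#}$. Thus each pair of distinct points is covered by exactly $3$ blocks, which is the remaining design axiom, and $(\Omega,\mathcal{B})$ is a $2$-$(n,4,3)$ design. Since the valency $4$ is even, $(\Omega,S)$ is moreover symmetric by \cite{ZYM03}, so in fact $s=s^{*}$ and the sum collapses to $\sum_{s}c_{ss}^{r}$, though this simplification is not needed.

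The only point requiring care — and the step I expect to be the real obstacle beneath the word ``obvious'' — is bookkeeping about distinctness of blocks. The count above tallies the pairs $(\alpha,s)$ for which $\{\gamma,\delta\}\subset\alpha s$; for this to equal the number of \emph{blocks} of $\mathcal{B}$ through $\{\gamma,\delta\}$ one must know that the assignment $(\alpha,s)\mapsto\alpha s$ is injective, so that $\mathcal{B}$ is read as the indexed family of $s$-neighborhoods rather than a set in which coincident neighborhoods are identified. I would therefore first verify that distinct pairs yield distinct blocks (or simply declare $\mathcal{B}$ to be this indexed family), after which the intersection-number computation gives $\lambda=3$ unambiguously.
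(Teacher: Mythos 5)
Your argument is correct and is exactly the computation the paper leaves implicit when it declares the result ``obvious'' from Theorem \ref{theorem2.3}: block size $4$ comes from the valency, and $\lambda=\sum_{s\in S^{\#}}c_{s^{*}s}^{r}=c(r)=3$ comes from pseudocyclicity. Your remark about reading $\mathcal{B}$ as an indexed family (or checking that distinct pairs $(\alpha,s)$ give distinct blocks) is a legitimate point of care that the paper does not address, but it does not change the approach.
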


\begin{remark} \cite{T01} A 4-equivalenced association scheme exists if and only if there exists degree m, order 4m Frobenius group by Boykett.T. And it's known when the Frobenius group exists in \cite{T01}.

\end{remark}



\end{document}